\let\origsection=\section \def\section{\@ifstar{\origsection*}{\mysection}} 
\def\mysection{\@startsection{section}{1}\z@{.7\linespacing\@plus\linespacing}{.5\linespacing}{\normalfont\scshape\centering\S}}
\renewcommand{\PrintDOI}[1]{\doi{#1}}
\theoremstyle{plain}
\newtheorem{fact}{Fact}[section]
\newtheorem{thm}[fact]{Theorem}
\newtheorem{prop}[fact]{Proposition}
\newtheorem{cor}[fact]{Corollary}
\newtheorem{claim}[fact]{Claim}
\newtheorem{conj}[fact]{Conjecture}
\theoremstyle{definition}
\newtheorem{rem}[fact]{Remark}
\numberwithin{equation}{section}
\let\theta=\vartheta
\let\rho=\varrho
\let\phi=\varphi
\def\tn{\textnormal}                           
\def\lra{\longrightarrow}                         
\def\gs{\geqslant}                             
\def\ls{\leqslant}
\def\vt{\vartheta}
\def\GG{{\mathcal G}}
\theoremstyle{plain}
\begin{document}

\title[The Clique Density Theorem]{The Clique Density Theorem}

\author[Christian Reiher]{Christian Reiher}
\address{Fachbereich Mathematik, Universit\"at Hamburg, Hamburg, Germany}
\email{Christian.Reiher@uni-hamburg.de}

\keywords{structural Ramsey theory, Ramsey classes, partite construction}
\subjclass[2010]{Primary 05D10, Secondary 05C55}

\begin{abstract}
Tur\'{a}n's theorem is a cornerstone of extremal graph theory. It asserts that for any integer 
$r \gs 2$ every graph on $n$ vertices with more than ${\tfrac{r-2}{2(r-1)}\cdot n^2}$ edges 
contains a clique of size $r$, i.e., $r$ mutually adjacent vertices. The corresponding
extremal graphs are balanced $(r-1)$-partite graphs.

The question as to how many such $r$-cliques appear at least in any~$n$-ver\-tex graph 
with $\gamma n^2$ edges has been intensively studied in the literature. In particular,
Lov\'{a}sz and Simonovits conjectured in the 1970s that asymptotically the best possible 
lower bound is given by the complete multipartite graph with $\gamma n^2$ edges in which
all but one vertex class is of the same size while the remaining one may be smaller.

Their conjecture was recently resolved for $r=3$ by Razborov and for $r=4$ by Nikiforov.
In this article, we prove the conjecture for all values of~$r$. 
\end{abstract}

\keywords{extremal graph theory, clique density problem}
\subjclass[2000]{Primary: 05C35, Secondary: 05C22.}

\maketitle

\section{Introduction}

Extremal graph theory was initiated as a separate subarea of combinatorics 
by P.~Tur\'{a}n in~1941. 
In his famous article~\cite{Turan} the following problem is solved: Given integers
${n\gs r\gs 3}$, what is the maximum number of edges that an $n$-vertex graph may have without
containing a clique of size $r$, i.e., $r$ vertices any two of which are connected by an edge.
It turns out that there is a unique extremal graph for this problem, which is the 
complete $(r-1)$-partite graph with the property that the sizes of any two of its vertex 
classes differ by at most one. In particular if a graph with $n$ vertices has more than 
$\tfrac{r-2}{2(r-1)}\cdot n^2$ edges, then it needs to contain an $r$-clique, and for fixed 
$r$ the constant $\tfrac{r-2}{2(r-1)}$ appearing in this statement is sharp.

Given this result, one may ask how many $r$-cliques are guaranteed to exist in
graphs having more edges. That is, given an integer $r\gs 3$ and a real number number 
$\gamma>\tfrac{r-2}{2(r-1)}$ we want to know: what the minimum number of $r$-cliques 
appearing in an $n$-vertex graph with at least $\gamma n^2$ edges? Following the 
work~\cite{Goodman},~\cite{Momo},~\cite{Nost}, and~\cite{Had-Nik}, 
a general conjecture was formulated by Lov\'{a}sz and Simonovits in~\cite{LovSim}.  
The guiding idea behind their conjecture is that, up to some ``rounding errors'', there
should be for each such case an extremal graph that is again complete and multipartite.
Moreover, all its vertex classes should be of the same size, except for one that may be 
smaller. 

Now consider such an $(s+1)$-partite graph, $s$ of whose vertex classes have size   
$\tfrac{n(1+\alpha)}{s+1}$, so that the remaining class contains just 
$\tfrac{n(1-s\alpha)}{s+1}$ vertices, where $\alpha\in\bigl[0, \tfrac{1}{s}\bigr]$.
A short calculation discloses that for $\gamma=\tfrac{s}{2(s+1)}(1-\alpha^2)$ this graph has 
$\gamma n^2$ edges and that the number of its $r$-cliques is 
\[
\frac 1{(s+1)^r}\binom{s+1}{r}(1+\alpha)^{r-1}\bigl(1-(r-1)\alpha\bigr)\cdot n^r\,.
\]

We thus arrive at the following clique density conjecture due to Lov\'{a}sz and Simonovits.

\begin{conj} \label{CDC}
If $r\gs 3$ and $\gamma\in\left[0, \tfrac12\right)$, then every graph on $n$ vertices with at least $\gamma n^2$ edges contains at least
\[
\frac 1{(s+1)^r}\binom{s+1}{r}(1+\alpha)^{r-1}\bigl(1-(r-1)\alpha\bigr)\cdot n^r
\]
cliques of size $r$, where $s\gs 1$ is an integer with 
$\gamma\in\bigl[\tfrac{s-1}{2s}, \tfrac{s}{2(s+1)}\bigr]$ and 
$\alpha\in\bigl[0, \tfrac 1s\bigr]$ is implicitly defined by $\gamma=\tfrac{s}{2(s+1)}(1-\alpha^2)$.
\end{conj}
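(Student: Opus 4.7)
The plan is to induct on $r$, taking Razborov's theorem (the case $r = 3$) as the base, and to work throughout in the more flexible category of \emph{weighted} graphs, in which each vertex $v$ of $\GG$ carries a non-negative weight $w_v$ with $\sum_v w_v = 1$, the edge density is $E(\GG) = \sum_{uv\in E(\GG)} w_u w_v$, and the $r$-clique density is $K_r(\GG) = \sum_C \prod_{v \in C} w_v$, the sum running over $r$-cliques $C$ of $\GG$. A standard compactness / graph-limits argument then reduces Conjecture~\ref{CDC} to showing that on the class of weighted graphs with $E(\GG) \gs \gamma$, the minimum value of $K_r(\GG)$ equals $F_r(\gamma)$ and is attained precisely at the weighted analogue of the conjectured extremal construction.

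The first step is Zykov-type symmetrization: if $u, v$ are two non-adjacent vertices in a weighted minimizer, one checks that moving all of $v$'s weight to $u$ (or conversely) cannot increase $K_r(\GG)$, from which one concludes that some minimizer may be assumed to be complete multipartite on some number $t$ of vertex classes with aggregated weights $w_1, \ldots, w_t$ summing to $1$. The problem then reduces to a finite-dimensional constrained optimization: minimize the elementary symmetric polynomial $e_r(w_1, \ldots, w_t)$ subject to $\sum_{i<j} w_i w_j \gs \gamma$.

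The heart of the argument is to analyse this finite optimization via Lagrange multipliers. At an interior critical point one obtains, for each $i$ in the support of $w$, an identity of the shape
\[
e_{r-1}(w_1, \ldots, \widehat{w_i}, \ldots, w_t) \;=\; \lambda + \mu\, w_i,
\]
for constants $\lambda, \mu$ depending only on $\GG$. Here the inductive hypothesis is meant to enter decisively, because each left-hand side is the $K_{r-1}$-density of a smaller complete multipartite weighted graph, and the $(r-1)$-case of the conjecture constrains the joint behaviour of these quantities as $w_i$ varies. A convexity/monotonicity argument should then force the support to split into at most two distinct weights, one occurring $s$ times and the other once, reproducing precisely the configuration prescribed by the conjecture.

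The main obstacle I anticipate lies in this last step: the function $e_r$ on the simplex under a quadratic constraint is neither convex nor concave for $r \gs 3$, so excluding spurious critical points with three or more distinct part-weights requires second-order information that one can only hope to extract from the inductive hypothesis at $r-1$. A further subtlety is that the conjectured extremal configuration changes its algebraic shape at each breakpoint $\gamma = \tfrac{s}{2(s+1)}$, so one must verify continuity and the correct phase transition; this suggests propagating the conclusion from smaller $\gamma$ (smaller $s$) to larger $\gamma$ inductively in $s$, within the outer induction on $r$.
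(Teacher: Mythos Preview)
Your outline is essentially the programme carried out by Nikiforov, which the paper explicitly credits with the cases $r=3,4$; the paper's own proof proceeds quite differently, and the difference matters.

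First, a minor point: the Zykov-type move you describe does not obviously preserve the constraint. If $u,v$ are non-adjacent then both $E(\GG)$ and $K_r(\GG)$ are \emph{linear} along the segment $w_u+w_v=\text{const}$, so pushing weight to an endpoint may decrease $E(\GG)$ below $\gamma$ precisely in the direction that would have decreased $K_r(\GG)$. One can still reduce to complete multipartite minimizers, but it requires a more careful comparison of the linear rates (this is part of what Nikiforov does); it is not a one-line symmetrization.

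The substantive gap is the step you yourself flag. After reducing to a complete $t$-partite weighted graph, the Lagrange condition
\[
e_{r-1}(w_1,\ldots,\widehat{w_i},\ldots,w_t)=\lambda+\mu\,w_i
\]
does let you feed in the $(r-1)$-case inductively, but the function $F_{r-1}$ is only piecewise concave and changes regime at every breakpoint $\tfrac{t-1}{2t}$; there is no global convexity to exploit, and for $r\gs 5$ no one has found a way to rule out critical configurations with three or more distinct part sizes by this route. This is exactly where Nikiforov's argument stops. Your proposal names the obstacle but offers no mechanism to get past it, and ``hope the inductive hypothesis is strong enough'' is not a plan here---it is known not to be, at least not in this formulation.

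The paper avoids this impasse entirely. It never reduces to multipartite graphs; it works in the larger class of \emph{edge}-weighted graphs ($\AA_{ij}\in[0,1]$), takes a putative minimal counterexample of minimal order $n$, and extracts from extremality both a Lagrange equation on vertex weights and an inequality $(*)$ on edge weights. The decisive new ingredient is a flag-algebra-flavoured pointwise inequality (the $(\logof)$ of Step~2) among local clique counts on $(r{+}1)$-sets, which after summation yields a relation linking $\GG(K_r)$, $\GG(K_{r+1})$, $\GG(K_3)$ and the Lagrange data. This is combined with an induction on $n$: for each vertex $i$ one forms the neighbourhood graph $\GG^*$ of order $n-1$, to which both the $(r-1)$-case and the $r$-case for smaller order apply, and a delicate analytic comparison (prepared in Section~4, culminating in Claim~4.5) converts these into a single scalar inequality in an auxiliary parameter $M$. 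The contradiction is then $M>1$ (from the counterexample) versus $M\ls 1$ (from the analysis). None of this passes through the finite symmetric-function optimization you propose.
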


The aim of this article is to prove this conjecture.

\begin{rem}\label{rem:cd} (1) If $s\ls r-2$, then the binomial coefficient $\binom{s+1}{r}$ vanishes, 
which means that, in accordance with Tur\'{a}n's construction, the clique density conjecture 
does not predict the existence of any $r$-cliques for $\gamma\ls \tfrac{r-2}{2(r-1)}$.  

(2) If $r\gs 3$ and $\gamma>\tfrac{r-2}{2(r-1)}$ are fixed while $n$ tends to infinity,
the clique density conjecture guarantees in particular $\Omega_\gamma(n^r)$ many 
$r$-cliques. This phenomenon is known as ``supersaturation'' in the literature. 
Due to the above discussion it should be clear that the precise factor occurring in the
clique density conjecture is optimal for fixed $r$ and $\gamma$.

(3) If $\gamma\in\left[0, \tfrac12\right)$ is not of the form $\gamma=\tfrac{t}{2(t+1)}$ 
for some positive integer $t$, then there is a unique way of choosing the pair $(s, \alpha)$
as above. On the other hand, if $\gamma=\tfrac{t}{2(t+1)}$ has this form, there are two
legitimate choices for this pair, namely $(t, 0)$ and $(t+1, \tfrac 1{t+1})$. 
Yet it is not hard to verify that both of them lead to the same lower bound of
\[
\frac 1{(t+1)^r}\binom{t+1}{r}\cdot n^r
\]
on the number of $r$-cliques. 
\end{rem}

We would like to conclude this introduction with some historical comments: 
Let $G$ be any $n$-vertex graph with at least $\gamma n^2$ edges.
Goodman~\cite{Goodman} proved that $G$ contains at least $\tfrac 13\gamma(4\gamma-1)n^3$ 
triangles. 
This fact was also obtained by Nordhaus and Stewart~\cite{Nost}. 
Moon and Moser~\cite{Momo} proved that if $\gamma\gs\tfrac 13$, then $G$ also contains
at least 
\[
\tfrac{1}{12}\gamma (4\gamma-1)(6\gamma-2)n^4
\]
cliques of size four and stated without proof that similarly, 
if $\gamma\gs\tfrac{r-2}{2(r-1)}$, then~$G$ contains at least
\begin{equation}\label{eq:conv}
\frac 1{r!}\cdot 2\gamma(4\gamma-1)(6\gamma-2)\cdot\ldots\cdot
\bigl(2(r-1)\gamma-(r-2)\bigr)\cdot n^r
\end{equation}
cliques of size $r$. This was subsequently shown by Kad{\v{z}}iivanov and
Nikiforov~\cite{Had-Nik}. It may be observed that the factor appearing in front of $n^r$
in~\eqref{eq:conv} is a convex function of $\gamma$ for $\gamma\gs\tfrac{r-2}{2(r-1)}$.
We will return to this ``convex  bound'' in Section~\ref{sec:convex}. It is not hard to check that
if $\gamma=\frac{t}{2(t+1)}$ with $t\gs r-2$ is one of the ``critical'' values from 
Remark~\ref{rem:cd}(3) then the convex bound is optimal and yields the same prediction 
on the number of $r$-cliques in $G$ as the clique density conjecture does. Between these
critical values, however, the optimal bound is piecewise concave (we will check this in 
Section~\ref{sec:anna}). Thus the piecewise linear function interpolating between these
critical values should also be a lower bound on the number of $r$-cliques in $G$
and this has in fact been shown by Bollob\'{a}s~\cite{BolRaz}.
For an alternative proof of a more general result we refer to~\cite{SchTho}.
  
The case $r=3$ and $\gamma\in\bigl[\tfrac14, \tfrac13\bigr]$ of the clique density conjecture 
was studied by Fisher~\cite{Fish} (see also~\cite{GolSan} and~\cite{Csik}*{Remark 3.3}). 
An altogether different approach to this 
case has later been given by Razborov in the fifth section of \cite{RazF}. 
The proof described there  is based on what one might 
call the ``differential calculus of flag algebra homomorphisms'', which in turn constitutes 
an important part of Razborov's flag algebraic investigations. 
Shortly afterwards, Razborov~\cite{RazT} used this calculus for resolving the case $r=3$ of 
Conjecture~\ref{CDC} for all $\gamma$.

The next important step is due to Nikiforov~\cite{Nik} who found an independent proof for
the case $r=3$ and settled the case $r=4$ as well. In~\cite{Nik} Nikiforov suggests 
to study the clique density problem in the setting of ``weighted graphs'' and we will 
follow this idea in the sequel. The problem thus translates into a question about
polynomial forms and we follow Nikiforov in applying differential techniques to these
forms. Moreover, we use some of Razborov's ideas in this framework. 

\section{Weighted Graphs}

Given a set $X$ and a positive integer $r$, we use $X^{(r)}$ to denote the collection of 
all \hbox{$r$-element} subsets of $X$. Also, if $n$ refers to a positive integer, then $[n]$ is, 
by definition, shorthand for $\{1, 2, \ldots, n\}$. By a {\it weighted graph of order $n$}, 
we mean a pair consisting of a sequence $(x_1, x_2, \ldots, x_n)$ of $n$ nonnegative real 
numbers the sum of which is equal to~$1$ and a function $a\colon [n]^{(2)}\lra [0, 1]$. 
In such situations, if $e=\{i, j\}\in [n]^{(2)}$, we will often write $a_e$ or $a_{ij}$ in
place of~$a(e)$.

Whenever $\GG$ is such a weighted graph of order $n$ and $r$ is a positive integer, 
we define the {\it $r$-clique density} of $\GG$ to be
\[
\GG(K_r)=\sum_{M\in[n]^{(r)}}\prod_{e\in M^{(2)}}a_e\prod_{i\in M}x_i\,.
\]
Notice for instance that
\[
\GG(K_1)=x_1+x_2+\ldots+x_n=1\,.
\]

The following weighted variant of Conjecture~\ref{CDC} is, as we are soon going to see,
equivalent to it. To the best of our knowledge, it has for the first time been formulated 
explicitly by Nikiforov~\cite{Nik}. 

\begin{claim} \label{Cl21}
Let $r\gs 3$ denote an integer and let $\GG$ be a weighted graph. 
Suppose that a positive integer $s$ and a real number $\alpha\in\left[0, \tfrac 1s\right]$ 
are chosen in such a way that 
\[
	\GG(K_2)=\tfrac{s}{2(s+1)}(1-\alpha^2)\,.
\]
Then
\[
\GG(K_r)\gs \frac 1{(s+1)^r}\binom{s+1}{r}(1+\alpha)^{r-1}\bigl(1-(r-1)\alpha\bigr)\,.
\]
\end{claim}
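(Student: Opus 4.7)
Claim~\ref{Cl21} is equivalent to Conjecture~\ref{CDC}: applying it to the weighted graph with uniform weights $x_i=1/n$ and $\AA$ the $0/1$ adjacency function of a graph yields the conjecture (using monotonicity of the right-hand side in $\GG(K_2)$ to absorb the fact that $\gamma n^2$ need not equal the exact edge count), and conversely, a blow-up/random-sampling argument approximates an arbitrary weighted graph by a large unweighted one. So the weighted formulation is merely a convenient reformulation: it opens the door to analytic machinery---compactness of the space of weighted graphs of bounded order, and partial differentiation in the $x_i$ and $\AA(E)$---instead of combinatorial approximations.

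I would attack the claim by a double induction: the outer variable is $r$, with base case $r=3$ known from Razborov's theorem, and the inner variable is the stratum $s$, with base case $s\ls r-2$, where the inequality is trivial because $\binom{s+1}{r}=0$. For the inductive step fix $r\gs 4$ and $s\gs r-1$, and take a weighted graph $\GG$ which achieves the minimum of $\GG(K_r)$ subject to $\GG(K_2)=\tfrac{s}{2(s+1)}(1-\alpha^2)$; a minimiser exists by a standard compactness or weak-limit argument, possibly after a reduction to bounded order.

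Now I would extract first-order information. Since $\GG(K_r)$ and $\GG(K_2)$ are multilinear in the edge weights, varying a single $\AA(E)$ along the constraint surface is a linear operation on both, forcing $\AA\in\{0,1\}$ at the extremum; after this reduction $\GG$ is an ordinary graph equipped with a vertex probability distribution~$x$. Lagrange multipliers for the constraints $\sum x_i=1$ and $\GG(K_2)=\gamma$ then yield, for every pair of vertices $i,j$ in the support of~$x$,
\[
\GG_i(K_{r-1})-\GG_j(K_{r-1}) = \lambda\bigl(\GG_i(K_1)-\GG_j(K_1)\bigr),
\]
where $\GG_i$ denotes the subgraph induced on the neighbourhood of~$i$ with the inherited weights, and $\lambda$ is the multiplier. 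This is the finitary replacement for Razborov's flag-algebraic derivatives; combined with the outer inductive hypothesis applied to each link $\GG_i$, it should control $\GG_i(K_{r-1})$ from below in terms of $\GG_i(K_2)$ and $\GG_i(K_1)$, with tight equality in the extremal case.

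The remaining work is structural: run the inner induction on $s$ by deleting or contracting a well-chosen vertex class to land in the $(r,s-1)$-case, and use the equality conditions in the link bound to force $\GG$ into complete $(s+1)$-partite shape with the class weights prescribed by the conjecture. The hard part, I expect, will be the interior regime $\alpha\in(0,1/s)$: there the distinguished ``small'' class is pinned down only through the second-order behaviour of $\GG(K_r)$, and the linear Lagrange identities above do not by themselves rule out nearby asymmetric configurations. One probably has to emulate more of Razborov's differential calculus---a Hessian-type second-order test on the constraint manifold, or a convexity argument in the parameter~$\alpha$---before the induction closes cleanly; the boundary cases $\alpha=0$ and $\alpha=1/s$ should be comparatively easy because the two admissible choices of $(s,\alpha)$ give the same bound there.
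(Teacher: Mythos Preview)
Your document is a plan, not a proof, and you say so yourself: the interior case $\alpha\in(0,1/s)$ is left to ``emulate more of Razborov's differential calculus''. That is precisely the whole problem, and your outline does not contain the idea that closes it.

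Two concrete issues. First, the reduction to $\AA\in\{0,1\}$ is not justified by multilinearity alone. On the constraint surface $\GG(K_2)=\gamma$ you cannot move a single $\AA(E)$; you must move at least two, and if $r\ge 4$ some monomials of $\GG(K_r)$ contain both, so the restriction is quadratic, not linear, and interior minima are not excluded. The paper never makes this reduction; it keeps $\AA$ fractional and extracts from extremality only the one-sided condition $\AA_{ij}\bigl(\lambda-\GG_{ij}(K_{r-2})\bigr)\ge 0$, which is exactly what a later Cauchy--Schwarz-type step needs.

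Second, and more importantly, your structural programme---induct on $s$, force the minimiser to be complete $(s+1)$-partite---is the Nikiforov route, and it is known to stall beyond $r=4$ for the reason you name. The paper does something different. It never attempts to identify the extremal graph. Instead it proves a flag-algebraic inequality of the shape
\[
(r-1)\,\GG(K_r)+(r+1)\,\GG(K_{r+1})\ \le\ \lambda\bigl(\gamma+3\,\GG(K_3)\bigr)-2\gamma\mu,
\]
obtained by an elementary pointwise inequality on $(r+1)$-sets combined with the Lagrange equations and the one-sided condition above. The induction is on $r$ and on the \emph{order} $n$ (not on $s$): for each vertex $i$ one passes to the normalised neighbourhood $\GG^*$ of order $n-1$ and applies both the $(r-1)$-case and, by minimality of $n$, the $r$-case to $\GG^*$. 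A separate bootstrap---the Lov\'asz--Simonovits bound, reproved for weighted graphs---handles the boundary values $\alpha\in\{0,1/s\}$ where $F_r$ is not differentiable and furnishes a crude a priori estimate that keeps the analytic parameters in range. None of these three ingredients (the $K_{r+1}$/$K_3$ inequality, the induction on $n$ via neighbourhoods, the Lov\'asz--Simonovits bootstrap) appears in your plan, and each is load-bearing.
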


To see that this indeed entails Conjecture~\ref{CDC}, take a graph $G$ and an integer 
$r\gs 3$, label the vertices of $G$ arbitrarily 
as $\{v_1, v_2, \ldots, v_n\}$, and construct a weighted graph $\GG$ of order~$n$ by 
the stipulations $x_1=x_2=\ldots=x_n=\tfrac 1n$ and
\[
a_{ij}=
\begin{cases}
1 & \textnormal{ if $v_i$ and $v_j$ are joined by an edge of $G$,}  \cr
0 & \textnormal{ otherwise }  \cr
\end{cases}
\]
for all $\{i, j\}\in[n]^{(2)}$. Plainly $G$ has exactly $\GG(K_2)\cdot n^2$ edges and 
$\GG(K_r)\cdot n^r$ cliques of size~$r$, which proves the desired estimate.

As we shall not need the converse direction, we only give a sketch of its proof. 
Let a weighted graph $\GG$ of order $n$ specified by the sequence $(x_1, x_2, \ldots, x_n)$ 
of~$n$ real numbers and by the function $a\colon [n]^{(2)}\lra [0, 1]$ be given. 
Consider a large integer $k$ and form a graph $H$ whose vertices fall into $n$ 
independent classes $V_1, V_2, \ldots, V_n$ whose sizes are approximately 
$kx_1, kx_2, \ldots, kx_n$ respectively, and in which for each unordered pair 
$\{i, j\}\in [n]^{(2)}$ roughly a proportion of $a_{ij}$ among all possible edges from 
$V_i$ to $V_j$ is present in a sufficiently random way. 
Such a graph $H$ can in particular be arranged to have $k$ vertices, 
${\GG(K_2)\cdot k^2\pm O(k)}$ edges and $\GG(K_r)\cdot k^r\pm O(k^{r-1})$ cliques of size $r$, 
so letting~$k$ tend to infinity we may in fact derive Claim~\ref{Cl21} from 
Conjecture~\ref{CDC}.

Throughout the rest of this article, we follow Nikiforov's suggestion~\cite{Nik} 
to think about the clique density problem in terms of weighted graphs.

\section{The convex lower bound}
\label{sec:convex}

In this section we discuss an analogue of the convex bound~\eqref{eq:conv} adapted to
the setting of weighted graphs. The proof we describe is essentially the same as that 
given by Kad{\v{z}}iivanov and~Nikiforov in~\cite{Had-Nik}. Nevertheless it might be
helpful to include full details here, because the second step of the proof of the 
clique density theorem to be given in Section~\ref{sec:cdt} will involve some similar 
calculations. 

Moreover, our proof of the clique density theorem uses the convex bound in two quite 
different ways. 
First, it implies that the clique density theorem holds for the critical values 
$\gamma=\tfrac{t}{2(t+1)}$, where $t$ is some positive integer, while our approach to 
Claim~\ref{Cl21} cannot deal with these values due to non-differentiability issues.
Second, it is going to be helpful later on to have some ``approximate version''
of the clique density theorem available.
 
\begin{prop} \label{pr31}
Given a weighted graph $\GG$ and an integer $r\gs 2$ such that the quantity 
$\gamma=\GG(K_2)$ is not smaller than $\tfrac{r-2}{2(r-1)}$, we have
\[
\GG(K_r)\gs \frac 1{r!}\cdot 2\gamma(4\gamma-1)(6\gamma-2)\cdot\ldots\cdot
\bigl(2(r-1)\gamma-(r-2)\bigr)\,.
\]
\end{prop}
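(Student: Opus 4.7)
The proof proceeds by induction on $r$. Write $f_r(\gamma) := \frac{1}{r!} \prod_{j=1}^{r-1}(2j\gamma - (j-1))$ for the claimed lower bound. The base case $r = 2$ is immediate, since $\GG(K_2) = \gamma = f_2(\gamma)$. For the inductive step, one observes the algebraic identity
\[
f_{r+1}(\gamma) = \frac{2r\gamma - (r-1)}{r+1}\, f_r(\gamma),
\]
so, the right-hand factor being nonnegative under the hypothesis $\gamma \gs \frac{r-1}{2r}$, it suffices to establish the auxiliary Kruskal--Katona-type inequality
\[
(r+1)\,\GG(K_{r+1}) \gs \bigl(2r\gamma - (r-1)\bigr)\, \GG(K_r).
\]
Combined with the inductive hypothesis $\GG(K_r) \gs f_r(\gamma)$, this yields the bound for $r+1$.

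To prove the auxiliary inequality I would exploit the clique-counting identities
\[
(r+1)\,\GG(K_{r+1}) = \sum_i x_i\, \GG_i(K_r), \qquad 2\gamma = \sum_i x_i\, \GG_i(K_1), \qquad \sum_i x_i = 1,
\]
where for each vertex $i$ and integer $m \gs 0$ we set
\[
\GG_i(K_m) := \sum_{M \in ([n]\setminus\{i\})^{(m)}} \prod_{E \in M^{(2)}} \AA(E) \prod_{j \in M} \AA(\{i,j\})\, x_j;
\]
each identity arises by singling out a distinguished vertex of the clique being counted. For every $i$ with $d_i := \GG_i(K_1) > 0$, the numbers $\HH_i(K_m) := \GG_i(K_m)/d_i^m$ are themselves the clique counts of a renormalised weighted graph $\HH_i$ on $[n]\setminus\{i\}$ with vertex weights $y_j^{(i)} := \AA(\{i,j\})\,x_j/d_i$ and inherited edge weights. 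The inductive hypothesis applied inside each $\HH_i$ at level $r$---replaced by the trivial bound $\HH_i(K_r) \gs 0$ when the local density $\HH_i(K_2)$ lies below the applicability threshold $\frac{r-2}{2(r-1)}$---produces the vertex-wise estimate $\GG_i(K_r) \gs d_i^r\, f_r(\HH_i(K_2))$. Multiplying by $x_i$, summing, and combining with the identity $\sum_i x_i d_i = 2\gamma$ and with convexity properties of $f_r$ on the relevant range should then deliver the auxiliary inequality.

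The main technical obstacle is precisely this Jensen-type averaging step. The values $\HH_i(K_2) = \GG_i(K_2)/d_i^2$ can behave very irregularly across $i$, with some vertices failing the applicability threshold of the inductive hypothesis, and the constraints linking them to $\gamma$---namely $\sum_i x_i d_i = 2\gamma$ together with $\sum_i x_i \GG_i(K_2) = 3\GG(K_3)$---do not plug into a textbook convexity argument. An alternative route, closer in spirit to the ``technically more demanding computation'' in Section~5 to which the author explicitly alludes, would be variational: select a weighted graph $\GG$ minimising $(r+1)\GG(K_{r+1}) - (2r\gamma - (r-1))\GG(K_r)$ subject to $\GG(K_2)=\gamma$, apply Lagrange multipliers in both the vertex weights $x_i$ and the edge weights $\AA(E)$ to force $\AA$ to take values in $\{0,1\}$ and the vertex support to decompose into few equivalence classes, and then finish by a direct computation on the resulting complete multipartite graphs, where the inequality reduces to a finite-dimensional claim that can be verified by hand.
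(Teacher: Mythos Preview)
Your inductive setup is exactly the paper's: both reduce the proposition to the auxiliary inequality
\[
(r+1)\,\GG(K_{r+1}) \gs \bigl(2r\gamma - (r-1)\bigr)\,\GG(K_r),
\]
and both note the factorisation $f_{r+1}(\gamma)=\tfrac{2r\gamma-(r-1)}{r+1}f_r(\gamma)$. The divergence, and the gap, is in how you attack that inequality.

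Your plan---apply the induction hypothesis inside each neighbourhood graph $\HH_i$ to get $\GG_i(K_r)\gs d_i^{\,r}f_r(\HH_i(K_2))$, then average over $i$ with a Jensen step---does not close. You correctly flag the obstacle yourself: the quantities $\HH_i(K_2)=\GG_i(K_2)/d_i^2$ are controlled only through $\sum_i x_i d_i=2\gamma$ and $\sum_i x_i\GG_i(K_2)=3\GG(K_3)$, neither of which pins down $\sum_i x_i d_i^{\,r} f_r(\HH_i(K_2))$ well enough, and many vertices may sit below the threshold where the hypothesis even applies. The variational alternative you sketch is also left as an outline. So at present the proposal does not contain a proof of the auxiliary inequality.

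The paper bypasses the averaging problem entirely. It works with $(r-1)$-cliques rather than single vertices: setting $\eta_L=\sum_{i\notin L}x_i\prod_{\ell\in L}\AA(\{i,\ell\})$ for $L\in[n]^{(r-1)}$, one has $\sum_L A_LX_L=\GG(K_{r-1})$ and $\sum_L A_LX_L\eta_L=r\,\GG(K_r)$. Cauchy--Schwarz gives
\[
r^2\GG(K_r)^2\ls\GG(K_{r-1})\sum_L A_LX_L\eta_L^{\,2}.
\]
The second moment is then bounded by $\GG(K_r)+(r^2-1)\GG(K_{r+1})$ via an elementary pointwise inequality $2B_M-C_M\ls(r^2-1)A_M$ on $(r+1)$-sets $M$, checked by linearity in each $\AA(E)$ on the values $\{0,1\}$. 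Feeding the induction hypothesis $r\,\GG(K_r)\gs(2(r-1)\gamma-(r-2))\GG(K_{r-1})$ into the resulting bound and cancelling $\GG(K_{r-1})>0$ yields exactly $(r+1)\GG(K_{r+1})\gs(2r\gamma-(r-1))\GG(K_r)$. No convexity, no local thresholds, no variational principle---just Cauchy--Schwarz plus a four-case check.
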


\begin{proof}
Clearly this follows by means of an easy induction on $r$ from the following statement:

\smallskip

{\it \hskip.5cm $(*)$ \hskip.5cm If a weighted graph $\GG$ satisfies $\gamma=\GG(K_2)\gs \tfrac{r-2}{2(r-1)}$ 
for some $r\gs 2$, then
\[
\GG(K_r)\gs \frac{2(r-1)\gamma-(r-2)}r\cdot\GG(K_{r-1}) 
\quad \tn{ \it and} \quad 
\GG(K_{r-1})>0\,.
\]
}

Thus it suffices to verify $(*)$ instead and this will again be done by induction on $r$. 
The base case $r=2$ is obvious in view of $\GG(K_1)=1$. 
So suppose now that $\GG$ is a weighted graph satisfying 
$\gamma=\GG(K_2)\gs \tfrac{r-1}{2r}>\tfrac{r-2}{2(r-1)}$ for some $r\gs 2$ and that
\[
r\cdot \GG(K_r)\gs \bigl(2(r-1)\gamma-(r-2)\bigr)\cdot\GG(K_{r-1}) 
\quad \tn{as well as } \quad 
\GG(K_{r-1})>0
\]
hold.
For the induction step, we remark that these assumptions trivially entail $\GG(K_r)>0$,
so that it only remains to estimate $\GG(K_{r+1})$ from below. Let $n$ denote the order 
of $\GG$ and suppose that $\GG$ is given by the sequence $(x_1, x_2, \ldots, x_n)$ of $n$ 
reals numbers and by the function $a\colon [n]^{(2)}\lra [0, 1]$.

For each $M\subseteq [n]$ we write
\[
A_M=\prod_{e\in M^{(2)}}a_e 
\quad \tn{ and } \quad 
X_M=\prod_{i\in M}x_i\,.
\]
Now consider any $M\in [n]^{(r+1)}$ and define
\[
B_M=\sum_{e\in M^{(2)}}\prod_{f\in M^{(2)}-\{e\}} a_f
\]
as well as
\[
C_M=\sum_{N\in M^{(r)}} A_N\,.
\]

We claim that these expressions satisfy
\begin{equation} \label{eq:ABCM}
2B_M-C_M\ls (r^2-1)A_M\,.
\end{equation}
To see this, we note that this inequality is linear in each of its variables~$a_e$ with
$e\in M^{(2)}$, which entails that we only need to look at the case where 
$a_e\in\{0, 1\}$ holds for all $e\in M^{(2)}$. 
Now if additionally the number 
\[
K=\tn{{\tt \#}}\,\bigl\{e\in M^{(2)}\,\big|\, a_e=0\bigr\}
\]
is at least $2$, then $A_M=B_M=0$, and $C_M\gs 0$; if $K=1$, then $A_M=0$, $B_M=1$, $C_M=2$; 
and finally if $K=0$, then $A_M=1$, $B_M=\tfrac 12 r(r+1)$, and $C_M=r+1$. 
This completes the proof of~\eqref{eq:ABCM}.

Multiplying this estimate by $X_M$ and summing over all possibilities for~$M$, we infer
\begin{equation} \label{eq:32}
\sum\limits_{M\in [n]^{(r+1)}} (2B_M-C_M)X_M\ls (r^2-1)\GG(K_{r+1})\,.
\end{equation}
Setting
\[
\eta_L=\sum_{i\in [n]-L}x_i\prod_{\ell\in L}a_{i\ell}
\]
for all $L\in [n]^{(r-1)}$, we shall now investigate the sum
\[
\Omega=\sum_{L\in [n]^{(r-1)}}A_LX_L\eta_L^2\,.
\]
Expanding the squares, we get several ``quadratic terms'' in which some $x_i^2$ appears 
as a factor and some ``mixed terms'' for which this is not the case. 
Let~$\Omega_{\mathrm{sq}}$ and $\Omega_{\mathrm{mix}}$ denote the corresponding sums. 
In view of the inequality $a_e^2\ls a_e$, that is valid for all $e\in [n]^{(2)}$, we
may estimate
\begin{align*}
\Omega_{\mathrm{sq}} &\ls 
\sum_{L\in [n]^{(r-1)}}A_LX_L\sum_{i\in [n]-L}x_i^2\prod_{\ell\in L}a_{i\ell}
=\sum_{Q\in [n]^{(r)}}A_QX_Q\sum_{i\in Q}x_i \\
& =\sum_{Q\in [n]^{(r)}}A_QX_Q\Big(1-\sum_{i\in [n]-Q}x_i\Big)
=\GG(K_r)-\sum_{M\in [n]^{(r+1)}}C_MX_M\,.
\end{align*}
Moreover, we have
\[
\Omega_{\mathrm{mix}}=2\sum_{M\in [n]^{(r+1)}} B_MX_M\,,
\]
whence
\[
\Omega=\Omega_{\mathrm{sq}}+\Omega_{\mathrm{mix}}\ls 
\GG(K_r)+\sum\limits_{M\in [n]^{(r+1)}} (2B_M-C_M)X_M\,.
\]
In combination with~\eqref{eq:32} this yields
\[
\Omega\ls\GG(K_r)+(r^2-1)\GG(K_{r+1})\,.
\]
On the other hand, we get a lower bound on $\Omega$ from the Cauchy-Schwarz inequality,
\[
\Bigg(\sum_{L\in [n]^{(r-1)}}A_LX_L\eta_L\Bigg)^2\ls 
\sum_{L\in [n]^{(r-1)}}A_LX_L\cdot \sum_{L\in [n]^{(r-1)}}A_LX_L\eta_L^2 \,,
\]
where
\[
\sum_{L\in [n]^{(r-1)}}A_LX_L\eta_L=r\sum_{Q\in [n]^{(r)}}A_QX_Q=r\cdot\GG(K_r)
\]
and
\[
\sum_{L\in [n]^{(r-1)}}A_LX_L=\GG(K_{r-1})\,.
\]
So altogether we have
\[
r^2\GG(K_r)^2\ls \GG(K_{r-1})\left(\GG(K_r)+(r^2-1)\GG(K_{r+1})\right).
\]
Invoking now the induction hypothesis, we obtain, after a permissible 
cancelation of $\GG(K_{r-1})$, that
\[
\bigl(2(r-1)r\gamma-r(r-2)\bigr)\GG(K_r)\ls\GG(K_r)+(r^2-1)\GG(K_{r+1})\,,
\]
and hence indeed
\[
\bigl(2r\gamma-(r-1)\bigr)\GG(K_r)\ls (r+1)\GG(K_{r+1})\,,
\]
which completes the induction step. This finally proves $(*)$ and thus the proposition.
\end{proof}

We would now like to make those consequences of Proposition~\ref{pr31} explicit that 
we shall really utilise in the sequel. The following corollary is a slight modification 
of inequality~(25) from~\cite{Had-Nik}.

\begin{cor}\label{cor33}
Suppose that $r$ and $s$ are integers satisfying $r\gs 2$ and $s\gs r-1$.
Then for every weighted graph $\GG$ satisfying $\gamma=\GG(K_2)>\tfrac{s-1}{2s}$ one has
\[
\GG(K_r)> \frac 1s\cdot\binom{s}{r}\cdot \left(\frac{2\gamma}{s-1}\right)^{r-1}\,.
\]
\end{cor}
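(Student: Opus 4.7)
My plan is to obtain Corollary~\ref{cor33} as a direct algebraic consequence of Proposition~\ref{pr31}. The hypothesis $s\geq r-1$ gives $\tfrac{s-1}{2s}\geq\tfrac{r-2}{2(r-1)}$, so the assumption $\gamma>\tfrac{s-1}{2s}$ comfortably meets the hypothesis of Proposition~\ref{pr31}; invoking that proposition yields the {\sc Lov\'{a}sz}--{\sc Simonovits} estimate
\[
\GG(K_r)\geq\frac{1}{r!}\prod_{k=1}^{r-1}\bigl(2k\gamma-(k-1)\bigr).
\]
It then only remains to check that this lower bound strictly exceeds the bound claimed in the corollary.

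Using the identity $\tfrac{1}{s}\binom{s}{r}=\tfrac{1}{r!}\prod_{k=1}^{r-1}(s-k)$, cancelling the common factor $\tfrac{1}{r!}$, and distributing the power $(2\gamma/(s-1))^{r-1}$ across the product, I would rewrite the target inequality in the more symmetric form
\[
\prod_{k=1}^{r-1}(s-1)\bigl(2k\gamma-(k-1)\bigr)\;>\;\prod_{k=1}^{r-1}2\gamma\,(s-k).
\]
The natural next move is to compare the $k$-th factors individually. A short computation produces
\[
(s-1)\bigl(2k\gamma-(k-1)\bigr)-2\gamma(s-k)=(k-1)\bigl(2s\gamma-(s-1)\bigr),
\]
an expression that vanishes at $k=1$ and is strictly positive for $k\geq 2$ by the strict hypothesis $\gamma>\tfrac{s-1}{2s}$.

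To assemble these factorwise estimates into the desired product inequality, I would observe that $k\leq r-1\leq s$ together with $\gamma>\tfrac{s-1}{2s}$ ensures that every left hand factor $(s-1)(2k\gamma-(k-1))$ is positive, while each right hand factor $2\gamma(s-k)$ is non-negative and can vanish only in the borderline situation $r-1=s$, in which case the right hand product is zero and the inequality is trivial. Otherwise all right hand factors are strictly positive, and multiplying the factorwise inequalities produces the strict product inequality, at least one strict factor being available as soon as $r\geq 3$. The only step requiring any real thought is the algebraic identity $(s-1)(2k\gamma-(k-1))-2\gamma(s-k)=(k-1)(2s\gamma-(s-1))$; the rest is routine bookkeeping.
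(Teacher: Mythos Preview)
Your argument is correct and follows essentially the same route as the paper: invoke Proposition~\ref{pr31} and then compare the {\sc Lov\'asz}--{\sc Simonovits} product with the claimed bound factor by factor, the paper's inequality $2i\gamma-(i-1)>\tfrac{2\gamma(s-i)}{s-1}$ being exactly your identity $(s-1)(2k\gamma-(k-1))-2\gamma(s-k)=(k-1)(2s\gamma-(s-1))$ after clearing the denominator. You are in fact a bit more careful than the paper about where the strict inequality comes from, correctly noting that $k=1$ contributes only an equality and that a strict factor appears only once $r\gs 3$; for $r=2$ the corollary as stated degenerates to $\gamma>\gamma$, so this is a defect of the hypothesis rather than of either proof.
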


\begin{proof}
Clearly $\frac{s-1}{2s}\gs\tfrac{r-2}{2(r-1)}$, wherefore Proposition~\ref{pr31} tells us
\[
\GG(K_r)\gs \frac 1{r!}\cdot 2\gamma(4\gamma-1)(6\gamma-2)
\cdot\ldots\cdot\bigl(2(r-1)\gamma-(r-2)\bigr)\,.
\]
Now for each $i\in\{1, 2, \ldots, r-1\}$ we have
\[
2i\gamma-(i-1)> 2\gamma\left(i-\frac{s(i-1)}{s-1}\right)=\frac{2\gamma(s-i)}{s-1}\gs 0
\]
and hence
\[
\GG(K_r)> \left(\frac{2\gamma}{s-1}\right)^{r-1}\cdot \frac{(s-1)\cdot\ldots\cdot (s-r+1)}{r!}=\frac 1s\cdot\binom{s}{r}\cdot \left(\frac{2\gamma}{s-1}\right)^{r-1}\,. \qedhere
\]
\end{proof}

\begin{cor}\label{cor34}
Under the additional assumption $\alpha\in\left\{0, \tfrac 1s\right\}$, Claim~\ref{Cl21} holds.
\end{cor}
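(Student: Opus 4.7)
The plan is to observe that at the two boundary values $\alpha = 0$ and $\alpha = 1/s$, the right-hand side of Claim~\ref{Cl21} is exactly what Proposition~\ref{pr31} delivers, so Corollary~\ref{cor34} reduces to a short algebraic verification rather than anything requiring new ideas.

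First I would dispose of the ranges where the conclusion is vacuous. In the case $\alpha = 0$ the binomial coefficient $\binom{s+1}{r}$ annihilates the right-hand side whenever $s\ls r-2$, and in the case $\alpha = 1/s$ either $\binom{s+1}{r}$ or the factor $1-(r-1)/s$ kills it whenever $s\ls r-1$. In the remaining ranges I would just check that the relevant value of $\gamma$, namely $s/(2(s+1))$ when $\alpha=0$ and $(s-1)/(2s)$ when $\alpha=1/s$, satisfies $\gamma\gs (r-2)/(2(r-1))$; both reduce to $s\gs r-2$, which holds. This makes Proposition~\ref{pr31} applicable in both boundary cases.

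Next I would evaluate the product $\tfrac{1}{r!}\prod_{i=1}^{r-1}\bigl(2i\gamma-(i-1)\bigr)$ at each endpoint. For $\alpha=0$ and $\gamma=s/(2(s+1))$, each factor simplifies to $(s+1-i)/(s+1)$, and after dividing by $r!$ the product telescopes to $\binom{s+1}{r}/(s+1)^r$, which is precisely the right-hand side of Claim~\ref{Cl21} at $\alpha=0$. For $\alpha=1/s$ and $\gamma=(s-1)/(2s)$, the analogous factor is $(s-i)/s$, so the product equals $\binom{s}{r}/s^r$; a routine rearrangement of $\tfrac{1}{(s+1)^r}\binom{s+1}{r}(1+1/s)^{r-1}(1-(r-1)/s)$ (absorbing the $(s+1)^{r-1}/s^{r-1}$ from $(1+1/s)^{r-1}$ into $\binom{s+1}{r}/(s+1)^r$ and using $(s+1-r)!=(s-r+1)(s-r)!$) yields exactly the same value.

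There is no real obstacle here; this is essentially a computation confirming that the two formulas agree at the shared endpoint. The only subtlety worth flagging is that Corollary~\ref{cor33} provides only a strict inequality valid for $\gamma>(s-1)/(2s)$, and so would require a limiting argument to cover the equality case $\alpha=1/s$. Invoking Proposition~\ref{pr31} directly, whose bound is non-strict and already tight at that boundary, sidesteps this issue cleanly.
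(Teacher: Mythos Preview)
Your proof is correct and follows essentially the same route as the paper: at the boundary values the right-hand side of Claim~\ref{Cl21} collapses to $\tfrac{1}{(t+1)^r}\binom{t+1}{r}$ for an appropriate integer $t$, which is precisely what Proposition~\ref{pr31} yields once the vacuous range is set aside. The paper merely streamlines the presentation by treating both endpoints at once via the common form $\gamma=\tfrac{t}{2(t+1)}$ rather than handling $\alpha=0$ and $\alpha=\tfrac1s$ separately; one small slip in your write-up is that the second applicability check actually reduces to $s\gs r-1$, not $s\gs r-2$, though this is harmless since the non-vacuous range there is $s\gs r$.
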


\begin{proof}
We have already seen in the introduction that we have $\gamma=\tfrac t{2(t+1)}$ 
for some nonnegative integer $t$ in these cases. If $t\ls r-2$ our claim is obvious, 
so we may suppose $t\gs r-1$ from now on. Now $\gamma$ is large enough for 
Proposition~\ref{pr31} to be applicable and the desired result follows.
\end{proof} 

\section{Some analytical preparations}
\label{sec:anna}

This section provides a thorough analysis of the function occurring in Claim~\ref{Cl21}. 
It also includes some further technical results that we will need in the next section 
for the proof of the clique density theorem.

Throughout the present section, we fix two integers $r\gs 3$ and $s\gs r-1$ as well as a 
real number $M\gs 1$ satisfying
\begin{equation} \label{eq:Msmall}
\left(\frac{s-1}s\right)^{r-2}>\frac{s-r+1}{s-1}\cdot M^{r-2}\,.
\end{equation}

Define the function $F_r\colon\left[0, \tfrac 12\right)\lra \left[0, \tfrac 1{r!}\right)$ 
as follows: given $\gamma\in\left[0, \tfrac 12\right)$, choose the unique positive integer 
$t$ for which $\gamma\in\bigl[\tfrac{t-1}{2t}, \tfrac{t}{2(t+1)}\bigr)$ is true, 
determine the real number $\alpha\in\left(0, \tfrac 1t\right]$ solving the equation 
$\gamma=\tfrac t{2(t+1)}(1-\alpha^2)$, and set
\[
F_r(\gamma)=\frac 1{(t+1)^r}\binom{t+1}{r}(1+\alpha)^{r-1}\bigl(1-(r-1)\alpha\bigr)\,.
\]
In particular, we have 
\begin{equation}\label{eq:Frt}
F_r\left(\frac{t-1}{2t}\right)=\frac{1}{t^r}\binom{t}{r}
\end{equation}
for every positive integer $t$.

In terms of this function, the statement of Claim~\ref{Cl21} can be shortened to the 
inequality $\GG(K_r)\gs F_r(\GG(K_2))$, that is allegedly valid for all weighted 
graphs~$\GG$. We have more or less already seen earlier that $F_r$ is continuous and 
clearly it is piecewise differentiable as well. Moreover, $F_r$ vanishes identically 
on the interval $\bigl[0, \tfrac{r-2}{2(r-1)}\bigr]$. If 
$\gamma\in\bigl(\tfrac{t-1}{2t}, \tfrac{t}{2(t+1)}\bigr)$ holds for some integer 
$t\gs r-1$, then differentiating the equation locally defining $F_r(\gamma)$ with 
respect to $\alpha$, we infer
\[
-\frac {t\alpha}{t+1}\cdot F_r'(\gamma)=
-\frac{(r-1)r}{(t+1)^r}\binom{t+1}{r}\alpha(1+\alpha)^{r-2}\,.
\]
As $\alpha>0$, it follows that
\begin{equation}\label{eq:F-deriv}
F_r'(\gamma)=\frac{(r-1)r}{t(t+1)^{r-1}}\binom{t+1}{r}(1+\alpha)^{r-2}>0\,.
\end{equation}
Thus $F_r$ is strictly increasing on the interval 
$\bigl[\tfrac{r-2}{2(r-1)}, \tfrac 12\bigr)$ and, as
\[
\lim_{\gamma\lra 1/2}F_r(\gamma)=\lim_{t\lra\infty} \frac 1{(t+1)^r}\binom{t+1}{r}
=\frac 1{r!}\,,
\]
it possesses an inverse 
\[
F_r^{-1}\colon\bigl[0, \tfrac 1{r!}\bigr)\lra \bigl[\tfrac{r-2}{2(r-1)}, \tfrac 12\bigr)\,.
\]
Moreover, the above expression for $F_r'(\gamma)$ decreases as $\alpha$ decreases, 
whence $F_r$ is in an obvious sense piecewise concave. 
Notice that the identity function~$F_2$ on $\left[0, \tfrac 12\right)$ has essentially 
the same properties as $F_r$. This concludes our discussion of the most elementary 
properties of these functions.

Next we propose to look at the function 
\[
H\colon\left[\tfrac{r-2}{r-1}\cdot M, M\right]\lra\mathbb{R}^+_0
\]
given by
\[
\eta\longmapsto \frac 1{s^{r-1}}\binom{s}{r-1}\cdot\frac{(r-1)\eta-(r-2)M}{\eta^{r-1}}\,.
\]
\begin{claim} \label{Cl41}
The function $H$ is strictly increasing and satisfies
\[
F_{r-1}\left(\frac{s-2}{2(s-1)}\right)<H(M)\ls F_{r-1}\left(\frac{s-1}{2s}\right)\,.
\]
\end{claim}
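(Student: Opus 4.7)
The plan is to reduce each of the three assertions to an elementary inequality, using only the explicit formula defining $H$ and the definition of $F_{r-1}$ at the two indicated boundary values.

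First I would handle the monotonicity claim by a direct differentiation. Writing
\[
H(\eta)=\frac{1}{s^{r-1}}\binom{s}{r-1}\bigl[(r-1)\eta^{-(r-2)}-(r-2)M\eta^{-(r-1)}\bigr],
\]
a one--line computation yields
\[
H'(\eta)=\frac{(r-1)(r-2)}{s^{r-1}}\binom{s}{r-1}\cdot\eta^{-r}(M-\eta),
\]
which is strictly positive on $\bigl[\tfrac{r-2}{r-1}M, M\bigr)$ and zero at $\eta=M$; together with continuity this yields that $H$ is strictly increasing on the whole closed interval.

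Next I would evaluate the two ``target'' values of $F_{r-1}$ at the boundary points $\tfrac{s-1}{2s}=\tfrac{(s)-1}{2s}$ and $\tfrac{s-2}{2(s-1)}=\tfrac{(s-1)-1}{2(s-1)}$. Using the $\alpha=0$ representation, one gets
\[
F_{r-1}\!\left(\tfrac{s-1}{2s}\right)=\frac{1}{s^{r-1}}\binom{s}{r-1},
\]
whereas the other value is most conveniently expressed via the $\alpha=\tfrac{1}{s-1}$ representation (with $t=s-1$), giving
\[
F_{r-1}\!\left(\tfrac{s-2}{2(s-1)}\right)=\frac{1}{s^{r-1}}\binom{s}{r-1}\left(\frac{s}{s-1}\right)^{r-2}\cdot\frac{s-r+1}{s-1}.
\]
I would take a moment to record that the two different parameter choices yield the same number, as follows from the general boundary observation made in the introduction; here the second form is the useful one because $H(M)=\tfrac{1}{s^{r-1}}\binom{s}{r-1}\cdot M^{-(r-2)}$ shares the prefactor $\tfrac{1}{s^{r-1}}\binom{s}{r-1}$.

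With the three explicit expressions in hand, both inequalities collapse to numerical comparisons. The estimate $H(M)\leqslant F_{r-1}\bigl(\tfrac{s-1}{2s}\bigr)$ is simply $M^{-(r-2)}\leqslant 1$, i.e. the standing assumption $M\geqslant 1$. The strict estimate $F_{r-1}\bigl(\tfrac{s-2}{2(s-1)}\bigr)<H(M)$ rearranges, after cancelling the common prefactor, into
\[
\left(\frac{s-1}{s}\right)^{r-2}>\frac{s-r+1}{s-1}\cdot M^{r-2},
\]
which is precisely the hypothesis on $M$ fixed at the beginning of this section. There is really no obstacle here beyond bookkeeping; the one subtle point worth flagging is the choice of parametrisation of $F_{r-1}$ at the boundary $\gamma=\tfrac{s-2}{2(s-1)}$, since using the wrong one would obscure the identification with the hypothesis on $M$.
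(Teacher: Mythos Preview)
Your proof is correct and follows essentially the same route as the paper's: differentiate to get $H'(\eta)>0$, then compare $H(M)=\tfrac{1}{s^{r-1}}\binom{s}{r-1}M^{-(r-2)}$ with the two boundary values of $F_{r-1}$, reducing to the standing hypotheses $M\geqslant 1$ and $\left(\tfrac{s-1}{s}\right)^{r-2}>\tfrac{s-r+1}{s-1}M^{r-2}$. The only cosmetic difference is that the paper evaluates $F_{r-1}\bigl(\tfrac{s-2}{2(s-1)}\bigr)$ via the $(t,\alpha)=(s-2,0)$ parametrisation as $\tfrac{1}{(s-1)^{r-1}}\binom{s-1}{r-1}$, whereas you use $(t,\alpha)=(s-1,\tfrac{1}{s-1})$; the two expressions are of course equal and lead to the same final inequality.
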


\begin{proof}
If $\eta\in \left[\tfrac{r-2}{r-1}\cdot M, M\right)$, then
\[
H'(\eta)=\frac {(r-2)(r-1)}{s^{r-1}}\binom{s}{r-1}\cdot\frac{M-\eta}{\eta^{r}}>0\,,
\]
which entails the first part of our claim. Furthermore, by~\eqref{eq:Msmall} we have
\[
H(M)=\frac 1{s^{r-1}}\binom{s}{r-1}\cdot\frac 1{M^{r-2}}> \frac{s-r+1}{s(s-1)^{r-1}}
\binom{s}{r-1}=\frac 1{(s-1)^{r-1}}\binom{s-1}{r-1}\,,
\]
i.e.,
\[
H(M)>F_{r-1}\left(\frac{s-2}{2(s-1)}\right)\,.
\]
Finally, using $M\gs 1$, we get
\[
H(M)=\frac 1{s^{r-1}}\binom{s}{r-1}\cdot\frac 1{M^{r-2}}\ls 
\frac 1{s^{r-1}}\binom{s}{r-1}=F_{r-1}\left(\frac{s-1}{2s}\right)\,. \qedhere
\]
\end{proof}

Notice in particular that the composition $F_{r-1}^{-1}\circ H$ is defined everywhere 
on the interval $\left[\tfrac{r-2}{r-1}\cdot M, M\right]$ and that its range is contained 
in $\bigl[\tfrac{r-3}{2(r-2)}, \tfrac{s-1}{2s}\bigr]$, which in turn is included in 
$\left[0, \tfrac 12\right)$. For $t\in\{r-2, r-1, \ldots, s-1\}$ there exists a unique 
real number $\vt_t\in \left[\tfrac{r-2}{r-1}\cdot M, M\right]$ satisfying 
$H(\vt_t)=F_{r-1}\left(\tfrac{t-1}{2t}\right)$. The number $\vt_{s-1}$ will play a 
special r\^{o}le later and sometimes it will just be denoted by $\vt$. Evidently one has
\[
\tfrac{r-2}{r-1}\cdot M=\vt_{r-2}<\vt_{r-1}<\ldots<\vt_{s-1}=\vt< M\,.
\]
Furthermore~\eqref{eq:Frt} leads to
\begin{equation}\label{eq:Htt}
H(\vt_t)=\frac{1}{t^{r-1}}\binom{t}{r-1}
\end{equation}
for any $t\in\{r-2, r-1, \ldots, s-1\}$. In particular, for $t=s-1$ we get
\begin{equation}\label{eq:Htheta}
\vt^{r-1}=\left(\frac{s-1}{s}\right)^{r-1}
\cdot\frac{s}{s-r+1}\cdot\bigl((r-1)\vt-(r-2)M\bigr)
\end{equation}
due to the definition of~$H$. 

Later on we shall need some estimates concerning these numbers $\vt_t$.

\begin{claim} \label{Cl42}
If the integer $t$ belongs to the interval $[r-2, s-2]$, 
then $\vt_t\ls\tfrac{t}{t+1}\cdot M$. In addition, we have $\vt\gs \tfrac{s-1}s\cdot M$.
\end{claim}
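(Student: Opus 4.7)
Since Claim~\ref{Cl41} establishes that $H$ is strictly increasing, I would proceed by translating both assertions into inequalities about the values of $H$. Because $\vt_t$ is characterised by $H(\vt_t)=F_{r-1}\!\left(\tfrac{t-1}{2t}\right)$, the bound $\vt_t\ls\tfrac{t}{t+1}M$ is equivalent to $H\!\left(\tfrac{t}{t+1}M\right)\gs F_{r-1}\!\left(\tfrac{t-1}{2t}\right)$, while $\vt\gs\tfrac{s-1}{s}M$ is equivalent to $H\!\left(\tfrac{s-1}{s}M\right)\ls F_{r-1}\!\left(\tfrac{s-2}{2(s-1)}\right)$. A useful preliminary computation directly from the definition of $F_{r-1}$ shows that $F_{r-1}\!\left(\tfrac{t-1}{2t}\right)=\tfrac{1}{t^{r-1}}{t\choose r-1}$ for every integer $t\gs r-2$.

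For the first assertion I would substitute $\eta=\tfrac{t}{t+1}M$ into the defining formula for $H$. The numerator $(r-1)\eta-(r-2)M$ then becomes $\tfrac{t-r+2}{t+1}M$, and after invoking the identity $(t+1){t\choose r-1}=(t-r+2){t+1\choose r-1}$, the required inequality collapses to
\[
\frac{1}{s^{r-1}}{s\choose r-1}\gs \frac{M^{r-2}}{(t+1)^{r-1}}{t+1\choose r-1}.
\]
The function $n\mapsto{n\choose r-1}/n^{r-1}=\tfrac{1}{(r-1)!}\prod_{i=0}^{r-2}(1-i/n)$ is strictly increasing in $n$, so since $t+1\ls s-1$ the worst case is $t=s-2$, i.e.\ $t+1=s-1$. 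Plugging this in and using ${s\choose r-1}=\tfrac{s}{s-r+1}{s-1\choose r-1}$, the inequality reduces to $\left(\tfrac{s-1}{s}\right)^{r-2}\gs \tfrac{s-r+1}{s-1}M^{r-2}$, which is exactly the smallness condition imposed on $M$ at the start of the section.

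For the second assertion I would likewise substitute $\eta=\tfrac{s-1}{s}M$ into $H$. The numerator becomes $\tfrac{s-r+1}{s}M$, and then using ${s\choose r-1}=\tfrac{s}{s-r+1}{s-1\choose r-1}$ a routine simplification yields $H\!\left(\tfrac{s-1}{s}M\right)=\tfrac{1}{(s-1)^{r-1}M^{r-2}}{s-1\choose r-1}$, which in view of $M\gs 1$ is at most $\tfrac{1}{(s-1)^{r-1}}{s-1\choose r-1}=F_{r-1}\!\left(\tfrac{s-2}{2(s-1)}\right)$. The only real obstacle throughout is the bookkeeping of binomial identities; conceptually, the smallness condition on $M$ has been engineered precisely so as to make the critical inequality at $t=s-2$ in the first part just barely hold, whereas the harmless hypothesis $M\gs 1$ is exactly what is needed for the second part.
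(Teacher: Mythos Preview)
Your argument is correct and follows essentially the same route as the paper: both reduce the first assertion to the inequality $\tfrac{1}{(t+1)^{r-1}}\binom{t+1}{r-1}\ls\tfrac{1}{s^{r-1}M^{r-2}}\binom{s}{r-1}$, which the paper reads off directly as $H(\vt_{t+1})\ls H(M)$ while you verify it by monotonicity in $t$ together with the smallness condition on $M$. The second assertion is handled identically in both proofs via $M\gs 1$.
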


\begin{proof}
Whenever $t\in[r-2, s-2]$ is an integer, we have $H(\vt_{t+1})\ls H(M)$. Owing 
to~\eqref{eq:Htt} and the definiton of $H$ this yields
\[
\frac 1{(t+1)^{r-1}}\binom{t+1}{r-1}\ls \frac 1{s^{r-1}}\binom{s}{r-1}\frac 1{M^{r-2}}\,.
\]
Multiplying this by $(t-r+2)(t+1)^{r-2}/t^{r-1}$ we infer 
$H(\vt_t)\ls H(\tfrac t{t+1}\cdot M)$, thus proving the first part of our claim. 
Similarly but slightly easier we deduce from $M\gs 1$ that 
$H(\tfrac{s-1}s\cdot M)\ls H(\vt)$, which leads to the second part of the claim.
\end{proof}

\begin{claim} \label{Cl43}
If $\eta\in \left[\tfrac{r-2}{r-1}\cdot M, M\right]$ and $\nu=(F_{r-1}^{-1}\circ H)(\eta)$, 
then the function 
\[
Q\colon [0, \nu]\lra\mathbb{R}
\]
defined by
\[
\delta\longmapsto (r-1)\binom{s}{r-1}\delta-s^{r-1}\eta^{r-2}F_r(\delta)
\]
attains its global maximum at $\delta=\nu$.
\end{claim}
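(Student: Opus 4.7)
The plan is to prove the stronger assertion that $\sup_{\delta \in [0,\nu]} F_r'(\delta) \ls C$, where $C := \frac{(r-1)\binom{s}{r-1}}{s^{r-1}\eta^{r-2}}$. This will imply the claim at once, since $F_r$ is continuous and piecewise $C^1$, so by the fundamental theorem of calculus
\[
F_r(\nu) - F_r(\delta) \;=\; \int_\delta^\nu F_r'(x)\,dx \;\ls\; C(\nu-\delta),
\]
and rearranging gives $Q(\nu) \gs Q(\delta)$ for every $\delta \in [0,\nu]$. The subcase $\nu \ls \tfrac{r-2}{2(r-1)}$ is immediate: there $F_r\equiv 0$ on $[0,\nu]$ and $Q$ restricts to the linear function $\delta\mapsto (r-1)\binom{s}{r-1}\delta$. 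So I will henceforth assume $\nu > \tfrac{r-2}{2(r-1)}$ and let $t\in\{r-1,\dots,s-1\}$ denote the unique piece index with $\nu \in \bigl(\tfrac{t-1}{2t}, \tfrac{t}{2(t+1)}\bigr]$.

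To locate the supremum, I will combine two ingredients. First, by the piecewise concavity of $F_r$ proved earlier in this section, on every piece $[\gamma_{t'-1},\gamma_{t'}]$ (where $\gamma_{t'}=\tfrac{t'}{2(t'+1)}$) the derivative $F_r'$ is monotonically decreasing, so its maximum on that piece occurs at the left endpoint $\gamma_{t'-1}^+$. Second, substituting $\alpha = 1/t'$ into the formula for $F_r'$ and simplifying via the identity $r\binom{t'+1}{r} = (t'+1)\binom{t'}{r-1}$ yields the neat evaluation
\[
F_r'(\gamma_{t'-1}^+) \;=\; \frac{(r-1)\binom{t'}{r-1}}{(t')^{r-1}} \;=\; (r-1)\,F_{r-1}(\gamma_{t'-1}).
\]
Rewriting the middle expression as $\tfrac{1}{(r-2)!}\prod_{k=0}^{r-2}\bigl(1-\tfrac{k}{t'}\bigr)$ shows that it is strictly increasing in $t'$. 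Therefore the supremum of $F_r'$ on $[0,\nu]$ is attained at $\gamma_{t-1}^+$ and equals $(r-1)F_{r-1}(\gamma_{t-1})$.

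It will then remain to check $(r-1)F_{r-1}(\gamma_{t-1})\ls C$. Since $F_{r-1}$ is monotone and $\gamma_{t-1}\ls\nu$, I have $F_{r-1}(\gamma_{t-1})\ls F_{r-1}(\nu)=H(\eta)$, so the task reduces to $(r-1)H(\eta)\ls C$. A direct inspection of $H$ shows that this is equivalent to $(r-2)(\eta-M)/\eta\ls 0$, which follows at once from $\eta\ls M$ and $r\gs 3$. Chaining the inequalities will then produce the desired bound.

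The main obstacle---and the reason the conclusion is not immediate from concavity alone---is that $F_r'$ has \emph{upward} jumps at each breakpoint $\gamma_{t'}$, so the supremum of $F_r'$ on $[0,\nu]$ sits not near $\nu$ but just to the right of some earlier breakpoint. The key structural insight will be that these jumps grow with $t'$, so only the one closest to $\nu$ matters; and the identity $F_r'(\gamma_{t-1}^+)=(r-1)F_{r-1}(\gamma_{t-1})$ is precisely the bridge that allows the bound to be recast in terms of $H(\eta)$ and hence $C$.
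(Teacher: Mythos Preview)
Your proof is correct, and it proceeds along a genuinely different (and cleaner) line than the paper's. The paper argues via the piecewise \emph{convexity} of $Q$: since a convex function attains its maximum at an endpoint, it suffices to show (A) that $Q$ is increasing on the last piece $[\tfrac{t-1}{2t},\nu]$, and (B) that the breakpoint values satisfy $Q(\tfrac{d-1}{2d})\ls Q(\tfrac{t-1}{2t})$ for $d<t$. Part (B) is then handled by a separate computation exploiting the convexity of the auxiliary function $\Phi(x)=\prod_{k=1}^{r-1}(1-kx)$. Your route instead proves the stronger statement that $Q$ is \emph{non-decreasing} on the whole of $[0,\nu]$: the identity $F_r'(\gamma_{t'-1}^{+})=(r-1)F_{r-1}(\gamma_{t'-1})$ combined with the product form $\tfrac{1}{(r-2)!}\prod_{k=0}^{r-2}(1-k/t')$ shows at once that the left-endpoint jump values are monotone in $t'$, so the single bound $(r-1)F_{r-1}(\gamma_{t-1})\ls(r-1)F_{r-1}(\nu)=(r-1)H(\eta)\ls C$ disposes of all pieces simultaneously. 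This replaces the paper's two-part argument, and in particular the somewhat technical verification of (B), by a single monotonicity observation; the price is that one has to spot the identity linking $F_r'$ at a breakpoint to $F_{r-1}$, which the paper does not use explicitly.
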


\begin{proof}
Choose an integer $t\in[r-2, s-1]$ as well as a real number 
$\beta\in\bigl[0, \tfrac 1t\bigr]$ such that $\nu=\tfrac t{2(t+1)}(1-\beta^2)$. 
Since $Q$ is piecewise convex and convex functions attain their global maxima at 
boundary values, it suffices to establish the following two statements:
\begin{enumerate}
\item[$(A)$] The function $Q$ is increasing on $\left[\tfrac{t-1}{2t}, \nu\right]$.
\item[$(B)$] If $d\in[t-1]$, then $Q(\tfrac{d-1}{2d})\ls Q(\tfrac{t-1}{2t})$.
\end{enumerate}
For the proofs of both of these subclaims, we use
\begin{enumerate}
\item[$(C)$] $M^{r-2}\cdot \frac 1{t^{r-1}}\binom{t}{r-1}\ls\frac 1{s^{r-1}}\binom{s}{r-1}$,
\end{enumerate}
which is an obvious consequence of $H(\vt_t)\ls H(M)$. 

Now, to verify $(A)$, take any $\delta\in\left(\tfrac{t-1}{2t}, \nu\right)$ and write 
$\delta=\tfrac t{2(t+1)}(1-\alpha^2)$, where $\alpha\in\left(\beta, \tfrac 1t\right)$. 
Multiplying $(C)$ by $(r-1)s^{r-1}$ one obtains
\[
s^{r-1}M^{r-2}\frac{(r-1)r}{t(t+1)^{r-1}}\binom{t+1}{r}\left(\frac{t+1}t\right)^{r-2}
\ls (r-1)
\binom{s}{r-1}\,.
\]
In view of $\eta\ls M$ and $\alpha\ls \frac 1t$ this implies
\[
s^{r-1}\eta^{r-2}F_r'(\delta)\ls (r-1)\binom{s}{r-1}\,,
\]
whence $Q'(\delta)\gs 0$. Thereby we have proved assertion $(A)$.

Let us now turn our attention to $(B)$. If $t\ls r-1$, then $F_r$ vanishes at all 
relevant numbers and our claim is obvious. So henceforth we may suppose $t\gs r$ and for 
similar reasons $d\gs r-1$ as well. The function 
\[
\Phi\colon\left[\tfrac 1t, \tfrac 1{r-1}\right]\lra[0, 1]
\]
defined by
\[
x\longmapsto (1-x)(1-2x)\cdot\ldots\cdot \bigl(1-(r-1)x\bigr)
\]
is obviously convex, wherefore
\[
\frac{\Phi\left(\tfrac 1t\right)-\Phi\left(\tfrac 1d\right)}{\tfrac 1d-\tfrac 1t}
\ls -\Phi'\left(\tfrac 1t\right)\,.
\]
Since
\[
-\Phi'\left(\tfrac 1t\right)
=\left\{\frac{t}{t-1}+\frac{2t}{t-2}+\ldots+\frac{(r-1)t}{t-r+1}\right\}
\Phi\left(\tfrac 1t\right)
\ls\frac{(r-1)rt}{2(t-r+1)}\Phi\left(\tfrac 1t\right)\,,
\]
it follows that
\begin{align*}
\frac 1{t^r}\binom{t}{r}- \frac 1{d^r}\binom{d}{r}&
\ls\left(\frac 1d-\frac 1t\right)\frac{(r-1)r}{2(t-r+1)}\cdot\frac 1{t^{r-1}}\binom{t}{r} \\
&=(r-1)\left(\frac{t-1}{2t}-\frac{d-1}{2d}\right)\cdot \frac 1{t^{r-1}}\binom{t}{r-1}\,.
\end{align*}
Multiplying this by
\[
s^{r-1}\eta^{r-2}\cdot \frac 1{t^{r-1}}\binom{t}{r-1}\ls \binom{s}{r-1}\,,
\]
which in view of $\eta\ls M$ is a consequence of $(C)$, we deduce
\[
s^{r-1}\eta^{r-2}\left\{\frac 1{t^r}\binom{t}{r}- \frac 1{d^r}\binom{d}{r}\right\}
\ls (r-1)\binom{s}{r-1}\left(\frac{t-1}{2t}-\frac{d-1}{2d}\right),
\]
which is easily seen to be equivalent to $(B)$.
\end{proof}

The following result will only be used for $k=2$ and $k=r$, 
but the general case is not really harder.

\begin{claim} \label{Cl44}
For each integer $k\gs 2$ the function 
\[
J_k\colon \bigl[\tfrac{r-2}{r-1}\cdot M, M\bigr]\lra\mathbb{R}
\]
defined by
\[
\eta\longmapsto\eta^k(F_k\circ F_{r-1}^{-1}\circ H)(\eta)
\]
is concave or convex on the intervals $\left[\vt_t, \vt_{t+1}\right]$, 
where $t\in\{r-2, r-1,\ldots, s-2\}$, and $\left[\vt, M\right]$, depending on whether 
$k\gs r-1$ or $k\ls r-1$.
\end{claim}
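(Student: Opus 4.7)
The approach is parametric. On each piece I introduce $\alpha\in[0,1/t]$ such that $F_{r-1}^{-1}(H(\eta))=\tfrac{t}{2(t+1)}(1-\alpha^2)$, and then make the further substitution $z=\eta(1+\alpha)$. This substitution is motivated by the observation that $J(\eta)=C_k\eta^k(1+\alpha)^{k-1}(1-(k-1)\alpha)$ naturally contains the factor $\bigl(\eta(1+\alpha)\bigr)^{k-1}$; its effect is far more dramatic than expected, as it makes both $\eta$ and $J$ rational functions of the single variable $z$. Equating the two expressions for $H(\eta)$ gives the implicit relation $\eta^{r-1}(1+\alpha)^{r-2}(1-(r-2)\alpha)=\lambda\bigl((r-1)\eta-(r-2)M\bigr)$ with $\lambda=c/C$, which in terms of $z$ can be solved explicitly as
\[
\eta=\frac{(r-2)(z^{r-1}-\lambda M)}{(r-1)(z^{r-2}-\lambda)},
\]
and simultaneously $J$ collapses to $J=C_kz^{k-1}\bigl(k\eta-(k-1)z\bigr)$, where $C_k=\binom{t+1}{k}/(t+1)^k$.

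From here I compute $J'(\eta)=(dJ/dz)/(d\eta/dz)$ directly. The identity $\eta-z=-P(z)/\bigl((r-1)(z^{r-2}-\lambda)\bigr)$ with $P(z)=z^{r-1}-(r-1)\lambda z+(r-2)\lambda M$ --- a one-line consequence of the rational form of $\eta$ --- engineers a clean cancellation, yielding
\[
J'(\eta)=\frac{C_kk}{r-2}\bigl[(r-1-k)z^{k-1}+(k-1)\lambda z^{k-r+1}\bigr],
\]
a function of $z$ alone. Differentiating once more with respect to $z$, the factor $(r-1-k)$ appears as an overall coefficient because $k-r+1=-(r-1-k)$ couples the two terms. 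Dividing by $d\eta/dz=(r-2)z^{r-3}P(z)/\bigl((r-1)(z^{r-2}-\lambda)^2\bigr)$ then gives
\[
J''(\eta)=\frac{C_kk(k-1)(r-1-k)(r-1)}{(r-2)^2}\cdot\frac{(z^{r-2}-\lambda)^3}{z^{2r-k-3}P(z)}.
\]

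For the sign analysis I apply the same identity once more: $P(z)=(r-1)(z-\eta)(z^{r-2}-\lambda)$, so $(z^{r-2}-\lambda)^3/P(z)=(z^{r-2}-\lambda)^2/\bigl((r-1)(z-\eta)\bigr)$, which is nonnegative on the piece since $z-\eta=\eta\alpha\gs 0$. (The ratio blows up as $\alpha\lra 0$ at an endpoint, but in a direction consistent with the claimed one-sided behaviour.) Consequently $J''(\eta)$ carries the sign of $(r-1-k)$, which is nonpositive for $k\gs r-1$ (concavity) and nonnegative for $k\ls r-1$ (convexity), exactly as claimed. The borderline case $k=r-1$ is even more transparent, since $J(\eta)=\eta^{r-1}H(\eta)=c\bigl((r-1)\eta-(r-2)M\bigr)$ is manifestly affine and provides a built-in consistency check throughout.

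The main obstacle is spotting the substitution $z=\eta(1+\alpha)$. Without it the parametric second derivative of $J(\eta)$ is a tangled expression in $\eta$ and $\alpha$ whose sign resists analysis; with it the entire argument becomes rational-function bookkeeping, and the crucial factorisation $P(z)=(r-1)(z-\eta)(z^{r-2}-\lambda)$ is both what exposes the sign of $(r-1-k)$ and what explains why the boundary behaviour is compatible with one-sided concavity or convexity.
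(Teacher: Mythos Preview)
Your argument is correct. Both you and the paper parametrize each piece by the variable $\alpha$ satisfying $F_{r-1}^{-1}(H(\eta))=\tfrac{t}{2(t+1)}(1-\alpha^2)$ and compute $J''(\eta)$ explicitly, isolating the factor $(r-1-k)$ that governs the sign. The computational routes differ, however. The paper works with $S(\eta)=1/(1+\alpha)$, keeps $\eta$ as the independent variable, obtains $S'(\eta)$ from the implicit relation linking $H(\eta)$ and $S$, and then differentiates $J$ twice directly, arriving at
\[
J''(\eta)=C_k\cdot\frac{k(k-1)(r-k-1)\,\eta^{k-2}\bigl[S(\eta)M-\eta\bigr]^2}{S(\eta)^k\bigl(1-S(\eta)\bigr)\bigl[(r-1)\eta-(r-2)M\bigr]^2}.
\]
Your substitution $z=\eta(1+\alpha)=\eta/S$ inverts the dependence: $\eta$ becomes an explicit rational function of $z$, $J$ collapses to $C_kz^{k-1}\bigl(k\eta-(k-1)z\bigr)$, and the two differentiations via the chain rule are short polynomial manipulations in which the identity $P(z)=(r-1)(z-\eta)(z^{r-2}-\lambda)$ does the cancellation work. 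Your final expression and the paper's are equivalent (with $z-\eta=\eta(1-S)/S$), but your route is arguably cleaner and exposes more transparently why the answer is a rational function of~$z$. One small point: you should spell out that $c=\tfrac{1}{s^{r-1}}\binom{s}{r-1}$ and $C=C_{r-1}=\tfrac{1}{(t+1)^{r-1}}\binom{t+1}{r-1}$, since as written $\lambda=c/C$ is introduced without defining its constituents.
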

\begin{proof}
Treating both cases for $k$ at the same time, we select any $t\in\{r-2, r-1, \ldots, s-1\}$
and intend to verify that the second derivative of $J_k$ has the expected sign on 
$\left(\vt_t, \vt_{t+1}\right)$, where for convenience $\vt_s=M$. Utilising that 
$x\longmapsto\tfrac{2x-1}{x^2}$ is strictly increasing on $(0, 1)$ 
we may define a function 
\[
S\colon\left(\vt_t, \vt_{t+1}\right)\lra\left(\tfrac{t}{t+1}, 1\right)
\]
such that
\[
(F_{r-1}^{-1}\circ H)(\eta)=\frac t{2(t+1)}\cdot\frac{2S(\eta)-1}{S(\eta)^2}
\]
holds for all $\eta\in\left(\vt_t, \vt_{t+1}\right)$. 
Since the right hand side may be rewritten as
\[
\frac t{2(t+1)}\times\left\{1-\left(\frac 1{S(\eta)}-1\right)^2\right\}\,,
\]
we have
\[
\frac 1{s^{r-1}}\binom{s}{r-1}\frac{(r-1)\eta-(r-2)M}{\eta^{r-1}}
=\frac 1{(t+1)^{r-1}}\binom{t+1}{r-1}\frac{(r-1)S(\eta)-(r-2)}{S(\eta)^{r-1}}.
\]
Differentiating with respect to $\eta$ and dividing by $(r-2)(r-1)$, we find
\[
\frac 1{s^{r-1}}\binom{s}{r-1}\cdot\frac{M-\eta}{\eta^{r}}
=\frac 1{(t+1)^{r-1}}\binom{t+1}{r-1}\cdot\frac{1-S(\eta)}{S(\eta)^{r}}\cdot S'(\eta)\,,
\]
and the combination of both equations yields
\[
S'(\eta)=\frac{S(\eta)(M-\eta)\left[(r-1)S(\eta)-(r-2)\right]}
{\eta(1-S(\eta))\left[(r-1)\eta-(r-2)M\right]}\,.
\]
Furthermore
\[
J_k(\eta)=\frac 1{(t+1)^{k}}\binom{t+1}{k}\cdot\frac{kS(\eta)-(k-1)}{S(\eta)^k}\cdot\eta^k\,.
\]
Differentiating and using the above formula for $S'(\eta)$, we get
\[
J'_k(\eta)=\frac 1{(t+1)^{k}}\binom{t+1}{k}
\frac{k\eta^{k-1}\left[(r-1)S(\eta)\eta-(k-1)\eta+(k-r+1)S(\eta)M\right]}
{S(\eta)^k\left[(r-1)\eta-(r-2)M\right]}\,.
\]

A repetition of this argument leads to
\[
J''_k(\eta)=\frac 1{(t+1)^{k}}\binom{t+1}{k}\cdot
\frac{k(k-1)(r-k-1)\eta^{k-2}\left[S(\eta)M-\eta\right]^2}
{S(\eta)^k\left(1-S(\eta)\right)\left[(r-1)\eta-(r-2)M\right]^2}\,,
\]
which entails the desired conclusion in view of the presence of the factor $r-k-1$ in the numerator.
\end{proof}

\begin{claim} \label{Cl45}
For each $\eta\in\left[\frac{r-2}{r-1}\cdot M, M\right]$ the difference
\[
(r-1)\binom{s}{r-1}\eta^2\nu -s^{r-1}\eta^rF_r(\nu)
\]
is at most
\[
\frac{r-2}{(s-1)(s+1)}\binom{s+1}{r}\cdot
\left(\tfrac 12(r-1)s\vt^2-(r-1)s\vt M+r(s-1)M\eta\right)\,,
\]
where $\nu=(F_{r-1}^{-1}\circ H)(\eta)$.
\end{claim}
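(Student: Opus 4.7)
I would begin with a convexity reduction. In the notation of Claim~\ref{Cl44}, the left-hand side equals $(r-1)\binom{s}{r-1} J_2(\eta) - s^{r-1} J_r(\eta)$, where $J_k(\eta) = \eta^k(F_k\circ F_{r-1}^{-1}\circ H)(\eta)$. Since $r\ge 3$ gives $2\le r-1$ (so $J_2$ is convex) and trivially $r\ge r-1$ (so $J_r$ is concave), the LHS, being a positive linear combination of $J_2$ and $-J_r$, is convex on each of the intervals $[\vt_t,\vt_{t+1}]$ for $t = r-2,\ldots,s-2$, as well as on $[\vt,M]$. Since the right-hand side is an affine function of $\eta$ (the $\eta$-dependence appearing only in the single linear term $r(s-1)M\eta$), it suffices by convexity to verify the inequality at the finitely many breakpoints $\vt_{r-2},\vt_{r-1},\ldots,\vt_{s-1},M$.

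At $\eta = \vt_t$ for $r-2 \le t \le s-1$, the defining identity $H(\vt_t) = F_{r-1}(\tfrac{t-1}{2t})$ forces $\nu = \tfrac{t-1}{2t}$ and hence $F_r(\nu) = \tfrac{1}{t^r}\binom{t}{r}$. Substituting, employing $\binom{s+1}{r} = \tfrac{s+1}{r}\binom{s}{r-1}$ to eliminate that binomial coefficient from the RHS, and using the defining relation for $\vt_t$ once more to eliminate the factor $s^{r-1}\vt_t^{r-1}$, the inequality should reduce after cancellation of positive common factors to the clean quadratic inequality
\[
ts(M - \vt)^2 + (s - t - 1)M^2 \;\ge\; (s-1)(t+1)(M - \vt_t)^2.
\]
For $t = s-1$ both sides equal $s(s-1)(M-\vt)^2$; for $r-2 \le t \le s-2$, I would verify it via the bounds $\vt \ge \tfrac{s-1}{s}M$ and $\vt_t \le \tfrac{t}{t+1}M$ from Claim~\ref{Cl42}, combined with the defining equation for $\vt_t$ to sharpen the control on $M - \vt_t$ when needed.

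The endpoint $\eta = M$ is the most delicate. Here the smallness hypothesis $\bigl(\tfrac{s-1}{s}\bigr)^{r-2} > \tfrac{s-r+1}{s-1}M^{r-2}$ together with Claim~\ref{Cl41} places $H(M)$ strictly between $F_{r-1}(\tfrac{s-2}{2(s-1)})$ and $F_{r-1}(\tfrac{s-1}{2s})$, so the corresponding $\nu$ lies in $\bigl(\tfrac{s-2}{2(s-1)}, \tfrac{s-1}{2s}\bigr]$. Writing $\nu = \tfrac{s-1}{2s}(1-\alpha^2)$ with $\alpha \in [0, \tfrac{1}{s-1})$, we have the explicit formula $F_r(\nu) = \tfrac{1}{s^r}\binom{s}{r}(1+\alpha)^{r-1}(1-(r-1)\alpha)$, while the defining identity $H(M) = F_{r-1}(\nu)$ supplies a further algebraic relation between $\alpha$ and $M$. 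A direct (if laborious) expansion in $\alpha$ then verifies the inequality. This endpoint is also the main obstacle: $\nu$ admits no closed form in $M$, the smallness hypothesis is exactly what prevents $\nu$ from slipping into a Tur\'an interval of lower index (where the formula for $F_r$ would change and the inequality could fail), and the bound $\vt \ge \tfrac{s-1}{s}M$ from Claim~\ref{Cl42} is the key input needed to match the quadratic expression in $\vt$ on the RHS.
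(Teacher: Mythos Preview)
Your convexity reduction is correct and your algebra is right: the left side is piecewise convex by Claim~\ref{Cl44}, the right side is affine in $\eta$, so it suffices to check the inequality at the breakpoints $\vt_{r-2},\ldots,\vt_{s-1},M$; and at $\eta=\vt_t$ the inequality does reduce to
\[
ts(M-\vt)^2+(s-t-1)M^2\;\gs\;(s-1)(t+1)(M-\vt_t)^2,
\]
with equality when $t=s-1$.  This is a legitimate alternative to the paper's route.

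The paper proceeds differently, and the difference is instructive.  Rather than checking \emph{values} at all breakpoints, the paper uses the single equality $T(\vt)=L(\vt)$ together with one--sided \emph{derivative} bounds: $T'(\vt_t^+)\gs c$ for $r-2\ls t\ls s-2$ and $T'(M^-)\ls c$, where $c=(r-2)\binom{s}{r-1}M$ is the slope of the affine right side.  The payoff is that these derivative conditions do not involve $\vt$ at all; for instance the case $r-1\ls t\ls s-2$ collapses to the one--line observation $(M-\vt_t)(M-\tfrac{t+1}{t}\vt_t)\gs 0$, which follows immediately from $\vt_t\ls\tfrac{t}{t+1}M$.  By contrast, your breakpoint inequalities couple $\vt_t$ with $\vt$, and both are only known implicitly in terms of $M$.

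This coupling is where your sketch runs into trouble.  For the interior breakpoints with $t\ls s-2$ you would need an \emph{upper} bound on $M-\vt_t$ (a lower bound on $\vt_t$), and Claim~\ref{Cl42} supplies only the opposite direction.  More seriously, at $\eta=M$ you name $\vt\gs\tfrac{s-1}{s}M$ as ``the key input'', but observe that the right side, viewed as a function of $\vt\in[\tfrac{s-1}{s}M,M)$, is \emph{decreasing} in $\vt$ (it equals a constant times $\tfrac12(r-1)s[(\vt-M)^2-M^2]+r(s-1)M^2$).  Hence a lower bound on $\vt$ yields only an upper bound on the right side---the wrong direction for proving $T(M)\ls L(M)$.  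To make your endpoint check go through you would have to feed in the defining equation of $\vt$ itself, not merely the bound from Claim~\ref{Cl42}, and this makes the ``direct expansion in $\alpha$'' substantially more involved than your outline suggests.  The paper sidesteps all of this precisely because the derivative condition $T'(M^-)\ls c$ is $\vt$--free.
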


\begin{proof}
The difference under consideration, which we shall denote by $T(\eta)$ in the sequel, 
rewrites in the notation of Claim~\ref{Cl44} as
\[
T(\eta)=(r-1)\binom{s}{r-1}J_2(\eta)-s^{r-1}J_k(\eta)\,.
\]
In the special case $\eta=\vt$ we have $\nu=\tfrac{s-2}{2(s-1)}$ due to the definition 
of $\eta$ and using~\eqref{eq:Htheta} and~\eqref{eq:Frt} it is not hard to check that 
we have equality in the inequality we seek to establish. 
Moreover, Claim~\ref{Cl44} shows that $T$ is piecewise convex as a function of $\eta$.

For these reasons it suffices to prove the statements

\begin{enumerate}
\item[$(A)$] If $r-2\ls t\ls s-2$, then 
	$\lim\limits_{\eta\lra\vt_t^+}T'(\eta)\gs (r-2)\binom{s}{r-1}M$
\item[$(B)$] and $\lim\limits_{\eta\lra M^-}T'(\eta)\ls (r-2)\binom{s}{r-1}M$,
\end{enumerate}
where the superscripted plus or minus signs below the limit are intended to signify 
that~$\eta$ is supposed to approach the boundary value in question from the right or from the 
left, respectively. Throughout the computations that follow we use the function $S$ 
as well as the formulae for $J'(\eta)$ corresponding to $k=2$ and $k=r$ from the foregoing 
proof.

To verify $(A)$, we distinguish two cases.

\smallskip

{\it First Case: $t=r-2$}

\noindent Note that the hypothesis of $(A)$ yields $s\gs r$, wherefore $H(\vt_{r-1})\ls H(M)$, 
i.e.,
\[
\frac 1{(r-1)^{r-1}}\ls \frac 1{s^{r-1}}\binom{s}{r-1}\cdot\frac 1{M^{r-2}}\,.
\]
Now let $\eta\in\left(\vt_{r-2}, \vt_{r-1}\right)$ be arbitrary. Then
\[
\frac{(r-1)S(\eta)-(r-2)}{(r-1)^{r-1}\cdot S(\eta)^{r-1}}
\ls \frac 1{s^{r-1}}\binom{s}{r-1}
\frac{(r-1)S(\eta)-(r-2)}{S(\eta)^{r-1}\cdot M^{r-2}}\,,
\]
which by the definition of $S$ and $H$ yields $H(\eta)\ls H(S(\eta)M)$, 
and thus ${\eta\ls S(\eta)M}$. Also, $\eta\in\left(\tfrac{r-2}{r-1}\cdot M, M\right)$ gives
\[
\eta\gs (r-1)\eta-(r-2)M>0\,,
\]
which by $S(\eta)\ls 1$ may be weakened to
\[
\eta\gs S(\eta)\bigl((r-1)\eta-(r-2)M\bigr)\,.
\]
Consequently
\[
\eta\bigl(S(\eta)M-\eta\bigr)\gs 
S(\eta)\bigl(S(\eta)M-\eta\bigr)\bigl((r-1)\eta-(r-2)M\bigr)\,,
\]
i.e.,
\[
\eta\left[(r-1)S(\eta)\eta-\eta-(r-3)S(\eta)M\right]
\gs MS(\eta)^2\left[(r-1)\eta-(r-2)M\right]\,.
\]
Multiplying this by $(r-2)\binom{s}{r-1}S(\eta)^{-2}\left[(r-1)\eta-(r-2)M\right]^{-1}$ 
we infer
\[
T'(\eta)\gs (r-2)\binom{s}{r-1}M\,,
\]
as required.

\smallskip

{\it Second Case: $r-1\ls t\ls s-2$.}

\noindent Notice that Claim~\ref{Cl42} entails
\[
(r-2)(M-\vt_t)(M-\tfrac{t+1}t\vt_t)\gs 0\,,
\]
whence
\[
\vt_t\left[(r-2-\tfrac 1t)\vt_t-(r-3)M\right]
\gs \bigl(M-\tfrac 1t\vt_t\bigr)\bigl[(r-1)\vt_t-(r-2)M\bigr]\,.
\]
By $t>r-2$ the second factor on the right-hand side is positive, wherefore
\[
\vt_t\cdot\frac{(r-2-\tfrac 1t)\vt_t-(r-3)M}{(r-1)\vt_t-(r-2)M}\gs M-\tfrac 1t\vt_t\,.
\]
Multiplying by $(r-1)$ and subtracting $M-\tfrac{r-1}t\vt_t$ we obtain
\[
(r-1)\vt_t\cdot
\frac{\bigl(r-2-\tfrac 1t\bigr)\vt_t-(r-3)M}{(r-1)\vt_t-(r-2)M}
-\bigl(M-\tfrac{r-1}t\vt_t\bigr)
\gs (r-2)M\,.
\]
If we now multiply by $\binom{s}{r-1}$ and use
\[
\lim_{\eta\lra\vt_t^+}S(\eta)=\frac{t}{t+1}
\]
as well as the definition of $\vt_t$, we get indeed
\[
\lim\limits_{\eta\lra\vt_t^+}T'(\eta)\gs (r-2)\binom{s}{r-1}M.
\]
This completes the verification of $(A)$.

So let us now continue with $(B)$. From $H(M)>F_{r-1}\left(\tfrac{s-2}{2(s-1)}\right)$ 
one deduces easily
\[
S(M)>\frac{s-1}s\gs \frac{r-2}{r-1}\,,
\]
where we have made the obvious definition
\[
S(M)=\lim_{\eta\lra M^-}S(\eta)\,.
\]
This entails
\[
\frac{(s-r+1)S(M)}{(r-1)S(M)-(r-2)}\ls s-1\,,
\]
which in turn implies
\[
(r-1)\left\{\frac{(s-r+1)S(M)}{(r-1)S(M)-(r-2)}-(s-1)\right\}\left(\frac{1-S(M)}{S(M)}\right)^2
\ls 0\,.
\]
Adding $(r-2)s$ and rearranging our terms, we infer
\[
(r-1)(s-1)\frac{2S(M)-1}{S(M)^2}
-\frac{(s-r+1)\left(rS(M)-(r-1)\right)}{S(M)\left((r-1)S(M)-(r-2)\right)}\ls (r-2)s\,.
\]
Multiplying this by $\tfrac Ms\cdot\binom{s}{r-1}$ and exploiting the equation
\[
\frac 1{M^{r-2}}=\frac{(r-1)S(M)-(r-2)}{S(M)^{r-1}}\,,
\]
that follows easily from the definition of $S$, one gets assertion $(B)$, 
whereby Claim~\ref{Cl45} has finally been proved.
\end{proof} 

\section{Clique densities}
\label{sec:cdt}

We now come to the central section of this article, in which we are going to provide a 
proof of Claim~\ref{Cl21}, thus solving the clique density problem.

\begin{thm} \label{Thm51}
If $\GG$ is a weighted graph and $r\gs 2$ is an integer, then we have 
\[
	\GG(K_r)\gs F_r(\GG(K_2))\,.
\]
In other words, the estimate
\[
\GG(K_r)\gs 
\frac 1{(s+1)^r}\cdot\binom{s+1}{r}\cdot (1+\alpha)^{r-1}\bigl(1-(r-1)\alpha\bigr)
\]
holds, where $s$ refers to a positive integer for which 
$\gamma=\GG(K_2)$ belongs to the interval $\bigl[\tfrac{s-1}{2s}, \tfrac{s}{2(s+1)}\bigr]$ 
and $\alpha\in\bigl[0, \tfrac 1s\bigr]$ is required to satisfy 
$\gamma=\tfrac s{2(s+1)}(1-\alpha^2)$.
\end{thm}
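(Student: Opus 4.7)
The plan is to argue by induction on $r$. The base case $r=2$ is trivial since $F_2$ is the identity on $\left[0, \tfrac 12\right)$. For the inductive step, fix $r\geq 3$ and assume the theorem for $r-1$. Corollary~\ref{cor34} handles the boundary values $\alpha\in\{0, 1/s\}$, so it suffices to treat $\gamma=\GG(K_2)$ lying in the interior of some interval $(\tfrac{s-1}{2s}, \tfrac{s}{2(s+1)})$ for an integer $s\geq r-1$. Suppose for contradiction that some weighted graph $\GG$ in this regime satisfies $\GG(K_r)<F_r(\gamma)$. By a compactness argument (passing to an appropriate limit of a sequence of approximate counterexamples, or working in a suitable limit space of weighted graphs) I would select an extremal counterexample minimizing the deficit $F_r(\gamma)-\GG(K_r)$, with $s$ chosen minimal as a secondary objective---this mirrors Razborov's inner induction on $s$.

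At the extremum I would apply Lagrange multipliers to both the vertex weights $x_i$ (subject to $\sum x_i=1$) and the adjacencies $\AA(E)\in[0, 1]$. The edge-weight variations should force a rigid block structure (each $\AA(E)$ either at a boundary of $[0, 1]$ or satisfying a precise linear relation), while the vertex-weight variations should yield constants $\lambda,\mu$ such that
\[
\tau_i^{(r)} = \lambda\eta_i + \mu
\]
for every $i$ with $x_i>0$, where $\eta_i=\sum_{j\neq i}\AA(\{i, j\})x_j$ and $\tau_i^{(r)}=\partial\GG(K_r)/\partial x_i$ is the density of $r$-cliques through vertex $i$. Setting $M=\max_i \eta_i$ produces exactly the parameter $M$ of Section~4, and the smallness condition imposed on $M$ there should be verifiable a priori via Corollary~\ref{cor33}.

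The next step is to apply the inductive hypothesis for $r-1$ to the ``link'' weighted graph at each vertex, which yields a lower bound on $\tau_i^{(r)}$ in terms of $\eta_i$ encoded precisely by the function $H$ of Section~4. The optimal ``exchange rate'' between $\tau_i^{(r)}$ and $\eta_i$ along the Lagrangian line is then $\nu=(F_{r-1}^{-1}\circ H)(\eta_i)$, as certified by Claim~\ref{Cl43}. Claim~\ref{Cl45} supplies the quantitative upper bound on the local deficit $(r-1)\binom{s}{r-1}\eta_i^2\nu-s^{r-1}\eta_i^rF_r(\nu)$; summing this bound weighted by $x_i$, combined with the identities $\sum_i x_i\eta_i=2\gamma$ and $\sum_i x_i \tau_i^{(r)}=r\GG(K_r)$ and the Lagrangian equation above, should produce the desired lower bound $\GG(K_r)\geq F_r(\gamma)$, contradicting the choice of $\GG$.

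The main obstacle will be the precise synthesis of the Lagrangian conditions with the inductive lower bound, so that the global constant emerging from the aggregation is exactly $F_r(\gamma)$ rather than something strictly weaker. This requires tracking how the global maximum $M$ interacts with $F_r'(\gamma)$---which plays the role of the multiplier $\lambda$ here---and it is for exactly this purpose that Section~4 invests so much effort in the monotonicity, piecewise concavity/convexity, and tangency relations of $H$, $J$, $Q$, and $T$. A secondary technical hurdle is ensuring that the extremal counterexample is structurally rigid enough---in particular, that the adjacencies $\AA(E)$ are confined to a tractable set of values---for the variational identities to propagate cleanly through the analytical machinery, and one might anticipate needing auxiliary symmetrization steps before the Lagrangian calculus bites.
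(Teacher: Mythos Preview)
Your outline captures the overall architecture correctly---induction on $r$, compactness, Lagrange multipliers, the link construction, and the analytic machinery of Section~4---but there is a genuine gap at the aggregation step. The quantity that Claim~\ref{Cl45} actually controls, once you unwind the normalizations, is $\lambda\GG_i(K_2)-\GG_i(K_r)$; summing this against $x_i$ produces $3\lambda\GG(K_3)-(r+1)\GG(K_{r+1})$, not anything involving $\GG(K_r)$ directly. Your plan to combine the summed local bounds with $\sum_i x_i\eta_i=2\gamma$ and $\sum_i x_i\tau_i^{(r)}=r\GG(K_r)$ therefore cannot close the loop on its own. What is missing is precisely the paper's Second Step: a separate flag-algebraic computation (the inequality $(\logof)$ on $B_M,C_M,D_M$ together with the edge-variation condition~$(*)$) yielding
\[
(r-1)\GG(K_r)+(r+1)\GG(K_{r+1})\ \ls\ \lambda\bigl(\gamma+3\GG(K_3)\bigr)-2\gamma\mu,
\]
which is the bridge from $\GG(K_3)$ and $\GG(K_{r+1})$ back to $\GG(K_r)$. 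Only after adding this to the summed Claim~\ref{Cl45} bound does one obtain the final inequality $(1-M)\bigl((1-\alpha)-(1+\alpha)M\bigr)\ls 0$ that contradicts $M>1$.

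Two smaller but real issues: first, the link-graph step requires not only $F_{r-1}(\delta)\ls\GG^*(K_{r-1})$ (your induction on $r$) but also $\GG^*(K_r)\gs F_r(\delta)$, i.e.\ the theorem for $r$ itself on a smaller graph; the paper secures this by choosing $n$ minimal, whereas you only mention an inner induction on $s$. Second, $M$ in the paper is \emph{defined} through the multiplier $\mu$, not as $\max_i\eta_i$; the bound $\eta_i\ls M$ is then \emph{deduced} from~$(*)$ and the Lagrange equation, and $M>1$ is what encodes the counterexample hypothesis. Your description of $H$ as giving ``a lower bound on $\tau_i^{(r)}$'' is also slightly off: the Lagrange equation determines $\tau_i^{(r)}=\GG_i(K_{r-1})$ exactly, so $H(\eta_i)=\GG^*(K_{r-1})$ is an equality, and the role of the inductive hypothesis for $r-1$ is to convert this into the upper bound $\delta\ls\nu$ on the link's edge density.
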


Before we proceed to the proof itself, we give an informal outline of the strategy we use: 
the proof will be by induction on $r$ and 
for fixed $r$ by induction on the order $n$ of $\GG$. Once~$r$ and $n$ are fixed we will, 
rather than considering all possible $\GG$, restrict our attention to a hypothetical 
``extremal'' counterexample, where ``extremal'' means that the difference
$F_r(\GG(K_2))-\GG(K_r)$ is as large as possible. 
Then we already know from Corollary~\ref{cor34} that $\alpha\ne 0$ and $\alpha\ne\tfrac 1s$.
From now on, $r$, $s$, and $\alpha$ will be the main ``global variables'' describing~$\GG$ 
so that we regard a quantity appearing in the proof as being ``known'' when we can express 
it in terms of them. For instance, $\gamma$, $F_r(\gamma)$, and the derivative 
$\lambda=F'_r(\gamma)$ that will appear shortly are ``known''. 

Now what remains to be done is to establish an inequality in $n+\binom{n}{2}$ variables, 
where $n$ of the variables, $x_1, \ldots, x_n$, are the ``weights of the vertices'' of $\GG$
and the remaining ones, denoted by $a_{ij}$, are the ``weights of the edges'' of~$\GG$. 
So Lagrange's theorem on multivariate functions is applicable and yields some information 
per variable. 

For the ``vertex with number $i$'' we get essentially that the density $\GG_i(K_{r-1})$ of 
$r$-cliques it belongs to depends linearly on its weighted degree $\GG_i(K_{1})$. 
These terms are defined precisely below. For now it may suffice to say that the slope of 
this linear function is the known number $\lambda=F'_r(\gamma)$ mentioned above, 
while its constant term is an unknown new parameter $\mu$. 

Similarly we could get an equation for each ``edge variable'' $a_{ij}\in(0,1)$ but if $a_{ij}\in\{0,1\}$ we just get an inequality because we can vary $a_{ij}$ in one direction only. The precise estimate we thus obtain is called~\eqref{eq:lag-e} below. 

There is actually a quite interesting argument due to Nikiforov~\cite{Nik}, that
would allow us to assume $a_{ij}\in\{0,1\}$ for all $i\ne j$ here.

To gain additional information on the unknown number $\mu$, we multiply the equation gotten 
for the $i^{\mathrm{th}}$ vertex above by the weighted degree of that vertex and sum over $i$. 
The equation thus obtained may then be weakened by a calculation inspired by Razborov's 
work \cites{RazF, RazT} to the main inequality
\[
(r-1)\GG(K_r)+(r+1)\GG(K_{r+1})\ls\lambda\bigl(\gamma+3\GG(K_3)\bigr)-2\gamma\mu\,.
\]

What is a bit confusing about this estimate is that when we want to use it for learning 
something about $r$-cliques, it seems as if we should already know something about triangles 
and $(r+1)$-cliques. The way out of this difficulty is that in the graph case these are just 
edges and $r$-cliques in the neighbourhoods of vertices. This idea works in our weighted 
setting as well and thus allows to bring the induction hypothesis applied to ``weighted 
neighbourhoods'' into the argument. The inequality that results relates the $x_i$ and 
$a_{ij}$ in a quite involved way but by the results of Section~\ref{sec:anna} we can make 
it more well-behaved with respect to them at the cost of introducing a further constant $\vt$ 
that depends on the unknown number $\mu$. This leads to an inequality that besides
$r$,~$s$, and~$\alpha$ involves only $\mu$ and $\vt$, but the original variables $x_i$ 
and $a_{ij}$ will be gone. Finally it will turn out that this 
inequality is wrong.
 
Now we proceed with the details.

\begin{proof}[Proof of Theorem~\ref{Thm51}]
Since $F_2$ is the identity function confined to $\left[0, \tfrac 12\right)$, the result 
is clear for $r=2$. Arguing indirectly, let $r\gs 3$ denote the least integer for which 
the clique density theorem can fail\footnote{By the results of Razborov and Nikiforov
(\cite{RazT}, \cite{Nik}) we could assume $r\gs 5$ here, but actually there is no need 
for doing so.} and take $n$ to be the least order that counterexamples can possibly have. 
As we have already mentioned earlier, the function
\[ 
F_{r}\colon \left[0, \tfrac 12\right)\lra \left[0, \tfrac 1{r!}\right)
\]
is continuous. 
Now the collection of all weighted graphs of order~$n$ may in an obvious fashion be 
regarded as a compact topological space, which implies that the continuous function 
defined on it by
\[
\GG\longmapsto \GG(K_r)-F_r(\GG(K_2))
\]
attains an absolute minimum. Now fix a weighted graph $\GG$ of order $n$ for which this 
minimal value occurs. This property of $\GG$ will be referred to as ``extremality''. 
Choose an integer $s\gs 1$ as well as a real number $\alpha\in\left[0, \tfrac 1s\right]$ 
such that the number $\gamma=\GG(K_2)$ can be written as 
\begin{equation}\label{eq:gamma}
\gamma=\tfrac s{2(s+1)}(1-\alpha^2)\,.
\end{equation}
 
By the hypothesized failure of our theorem, we have
\[
\GG(K_r)< 
\frac 1{(s+1)^r}\cdot\binom{s+1}{r}\cdot (1+\alpha)^{r-1}\left(1-(r-1)\alpha\right)\,,
\]
which clearly can only happen if $s\gs r-1$. Also, Corollary~\ref{cor34} tells us that 
$\alpha\in\left(0, \tfrac 1s\right)$, and so the function $F_r$ is differentiable at 
$\gamma$. As we have seen in~\eqref{eq:F-deriv}, its derivative $\lambda=F_r'(\gamma)$ 
is given by
\begin{equation}\label{eq:lambda}
\lambda=\frac{(r-1)r}{s(s+1)^{r-1}}\binom{s+1}{r}(1+\alpha)^{r-2}\,.
\end{equation}
Let $\GG$ as usual be presented by the sequence $(x_1, x_2, \ldots, x_n)$ of nonnegative 
reals summing up to~$1$ and by the function $a\colon [n]^{(2)}\lra [0, 1]$. 
The remainder of the proof proceeds in five steps. 

\medskip

{\it First Step: Exploiting extremality.}

\noindent Clearly each of the numbers $x_1, x_2, \ldots, x_n$ has to be positive, for if 
one of them would vanish we could simply omit it, thus obtaining another counterexample
whose order would be smaller than $n$. Given a sequence $i_1, i_2, \ldots, i_m$ of distinct 
integers from $[n]$ as well as another integer $\rho\gs 1$, we set
\[
\GG_{i_1, i_2,\ldots, i_m}(K_\rho)
=\sum_{M\in I^{(\rho)}}\prod_{(k, j)\in [m]\times M}a_{i_kj}\prod_{e\in M^{(2)}}a_e
\prod_{j\in M}x_j\,,
\]
where $I=[n]-\{i_1, i_2,\ldots, i_m\}$. Note that for $m=0$ this coincides with our 
earlier notation, so no confusion can arise. 

As an example, we mention that for every
$i\in [n]$ we have
\[
\GG_i(K_1)=\sum_{j\in [n]-\{i\}}a_{ij}x_j\,,
\]
which may be thought of as the weighted degree of the ``vertex with index $i$''.
These weighted degrees satisfy
\[
\sum_{i\in [n]}x_i\GG_i(K_1)=2\sum_{\{i,j\}\in[n]^{(2)}}x_ix_ja_{ij}=2\GG(K_2)\,,
\]
whence
\begin{equation}\label{eq:deg-sum}
\sum_{i\in [n]}x_i\GG_i(K_1)=2\gamma\,.
\end{equation}

Similarly $\GG_i(K_{r-1})$ may be regarded as the relative density 
of $r$-cliques containing ``the vertex $i$''. It is soon going to be 
important for us that this quantity is also the partial derivative of
$\GG(K_r)$ with respect to $x_i$.

Since $(x_1, x_2, \ldots, x_n)$ is an 
interior point of the simplex
\[
\bigl\{(\xi_1, \xi_2, \ldots, \xi_n)\in[0, 1]^n\,\big|\,\xi_1+\xi_2+\ldots+\xi_n=1\bigr\}\,,
\]
Lagrange's theorem concerning the extremal values of multivariate functions reveals, 
that due to the extremality of $\GG$ there exists a certain real constant $\mu$ such that
\begin{equation}\label{eq:lag-v}
\GG_i(K_{r-1})=\lambda\GG_i(K_1)-\mu
\end{equation}
holds for all $i\in[n]$. 

As we are now going to see, the extremality of $\GG$ also implies:
\begin{equation}\label{eq:lag-e}
\text{For each } 
\{i, j\}\in [n]^{(2)} 
\text{ one has } 
a_{ij}\bigl(\lambda-\GG_{ij}(K_{r-2})\bigr)\gs 0\,.
\end{equation}

This is obvious whenever $a_{ij}$ vanishes, so let us suppose now that this number is 
positive. If~$\eta$ denotes any sufficiently small positive real number, we may construct 
a weighted graph $\GG^\eta$ agreeing entirely with $\GG$ except for having 
$a^\eta_{ij}=a_{ij}-\eta$. Clearly one has $\GG^\eta(K_2)=\gamma-\eta x_ix_j$ and 
${\GG^\eta(K_r)=\GG(K_r)-\eta x_ix_j\GG_{ij}(K_{r-2})}$, wherefore
\[
\GG^\eta(K_r)-F_r(\GG^\eta(K_2))=\GG(K_r)-F_r(\gamma)
+\eta x_ix_j\bigl(\lambda-\GG_{ij}(K_{r-2})\bigr)\pm O(\eta^2)\,,
\]
which in view of the assumed extremality of $\GG$ entails $\lambda\gs\GG_{ij}(K_{r-2})$ 
and hence~\eqref{eq:lag-e}.

\medskip

{\it Second Step: An estimate about triangles, $r$-cliques, and $(r+1)$-cliques.}

\noindent As in the proof of Proposition~\ref{pr31}, we set
\[
A_M=\prod_{e\in M^{(2)}}a_e 
\quad \tn{ and } \quad 
X_M=\prod_{i\in M}x_i
\]
for all $M\subseteq [n]$. This time, however, we need the further stipulations
\[
B_M=\sum_{i\in M}\Big(\sum_{j\in M-\{i\}}a_{ij}\Big)A_{M-\{i\}}\,,
\]
\[
C_M=\sum_{Q\in M^{(r)}}A_Q\,,
\]
and
\[
D_M=\sum_{i\in M}\sum_{\{j, k\}\in(M-\{i\})^{(2)}}(1-a_{ij})(1-a_{ik})A_{M-\{i\}}
\]
for all $M\in[n]^{(r+1)}$. What we shall need to know about these expressions is:
\begin{equation}\label{eq:ABCD}
\text{If }
M\in[n]^{(r+1)}, 
\text{ then }
B_M-(r-1)C_M+D_M\gs(r+1)A_M\,.
\end{equation}

To see this, we note again that our inequality is linear in each of its variables, 
for which reason we may suppose $a_e\in\{0, 1\}$ for all $e\in M^{(2)}$. 
Form a graph $H$ with vertex set $M$ by putting an edge between $i, j\in M$ exactly if 
$a_{ij}=1$. If $H$ is free from cliques of size $r$, then $A_M=B_M=C_M=D_M=0$. 
If $H$ contains a unique such clique and $i$ further edges, where $0\ls i\ls r-2$, 
then $A_M=0$, $B_M=i$, $C_M=1$, and $D_M=\binom{r-i}{2}$, wherefore indeed 
$B_M-(r-1)C_M+D_M=\binom{r-i-1}{2}\gs (r+1)A_M$. If the graph $H$ possesses exactly two 
cliques of size~$r$, then it misses precisely one edge and $A_M=0$, $B_M=2(r-1)$, $C_M=2$ 
as well as $D_M=0$. Finally, if $H$ happens to be a clique, then $A_M=1$, $B_M=r(r+1)$, 
$C_M=r+1$ and $D_M=0$. This analysis proves~\eqref{eq:ABCD} in all possible cases.

Multiplying the inequality just obtained by $X_M$ and summing over $M$, we deduce
\begin{equation}\label{eq:54}
\sum\limits_{M\in [n]^{(r+1)}}\bigl(B_M-(r-1)C_M+D_M\bigr)X_M\gs (r+1)\GG(K_{r+1})\,.
\end{equation}

Next, we consider the sum
\[
\Omega=\sum_{i\in[n]}x_i\GG_i(K_1)\GG_i(K_{r-1})\,.
\]
Expanding the product, we get several terms involving one of the variables 
$x_1, x_2, \ldots, x_n$ quadratically and certain other ``linear'' terms. We are thus led
to a decomposition
\[
\Omega=\Omega_{\mathrm{sq}}+\Omega_{\mathrm{mix}}\,,
\]
where
\[
\Omega_{\mathrm{sq}}
=\sum_{\{i, j\}\in [n]^{(2)}}(x_i^2x_j+x_ix_j^2)a_{ij}^2\GG_{ij}(K_{r-2})
\]
and
\[
\Omega_{\mathrm{mix}}=\sum_{M\in[n]^{(r+1)}}B_MX_M\,.
\]
Owing to~\eqref{eq:lag-e} we have
\begin{equation}\label{eq:omegasq}
\Omega_{\mathrm{sq}}
\gs\sum_{\{i, j\}\in [n]^{(2)}}(x_i^2x_j+x_ix_j^2)a_{ij}\GG_{ij}(K_{r-2})-\lambda\Phi\,,
\end{equation}
where 
\[
\Phi=\sum_{\{i, j\}\in [n]^{(2)}}(x_i^2x_j+x_ix_j^2)(a_{ij}-a_{ij}^2)\,.
\]
The sum on the right-hand side of~\eqref{eq:omegasq} rewrites as
\[
(r-1)\sum_{Q\in[n]^{(r)}}A_QX_Q\Big(1-\sum_{q\in [n]-Q}x_q\Big)
=(r-1)\GG(K_r)-(r-1)\sum_{M\in[n]^{(r+1)}}C_MX_M\,.
\]
So altogether he have
\[
\Omega\gs(r-1)\GG(K_r)+\sum_{M\in[n]^{(r+1)}}\bigl(B_M-(r-1)C_M\bigr)X_M-\lambda\Phi\,,
\]
which due to~\eqref{eq:54} may be weakened to
\[
\Omega\gs(r-1)\GG(K_r)+(r+1)\GG(K_{r+1})-\lambda\Phi-\sum_{M\in[n]^{(r+1)}}D_MX_M\,.
\]
Now notice that trivially
\[
\sum_{M\in[n]^{(r+1)}}D_MX_M\ls
\sum_{i\in[n]}\sum_{\{j, k\}\in ([n]-\{i\})^{(2)}}
a_{jk}(1-a_{ij})(1-a_{ik})x_ix_jx_k\GG_{jk}(K_{r-2})\,.
\]
Hence, writing
\[
V_{\{i, j, k\}}=a_{jk}(1-a_{ij})(1-a_{ik})+a_{ik}(1-a_{ij})(1-a_{jk})+a_{ij}(1-a_{ik})(1-a_{jk})
\]
for all $\{i, j, k\}\in[n]^{(3)}$ and applying~\eqref{eq:lag-e} again, we obtain
\begin{equation}\label{eq:omegalow}
(r-1)\GG(K_r)+(r+1)\GG(K_{r+1})-\lambda\Phi-\lambda\sum_{T\in [n]^{(3)}}V_TX_T\ls\Omega\,.
\end{equation}

On the other hand~\eqref{eq:lag-v} and~\eqref{eq:deg-sum} allow us to write
\[ 
\Omega=\lambda\sum_{i\in[n]}x_i\GG_i(K_1)^2-2\gamma\mu\,.
\]
So in view of the calculation
\begin{align*}
\sum_{i\in[n]}x_i\GG_i(K_1)^2&=\sum_{\{i, j\}\in [n]^{(2)}}(x_i^2x_j+x_ix_j^2)a_{ij}^2\\
&\phantom{+}+2\sum_{\{i, j, k\}\in[n]^{(3)}} 
	(a_{ij}a_{jk}+a_{jk}a_{ki}+a_{ki}a_{ij})X_{\{i, j, k\}}\\
&=-\Phi+\sum_{\{i, j\}\in [n]^{(2)}}x_ix_ja_{ij}\Big(1-\sum_{k\in [n]-\{i, j\}}x_k\Big)\\
&\phantom{=-\Phi+}+\sum_{\{i, j, k\}\in[n]^{(3)}} (a_{ij}+a_{jk}+a_{ki})X_{\{i, j, k\}}\\
&\phantom{=-\Phi+}-\sum_{T\in [n]^{(3)}}V_TX_T+3\GG(K_3)\\
&=-\Phi-\sum_{T\in [n]^{(3)}}V_TX_T+\gamma+3\GG(K_3)
\end{align*}
we have
\[
\Omega=\lambda\bigl(\gamma+3\GG(K_3)\bigr)-2\gamma\mu
-\lambda\Phi-\lambda\sum_{T\in [n]^{(3)}}V_TX_T\,.
\]

Comparing this with~\eqref{eq:omegalow} we finally arrive at the main estimate 
of this step, namely
\begin{equation}\label{eq:main}
(r-1)\GG(K_r)+(r+1)\GG(K_{r+1})\ls\lambda\bigl(\gamma+3\GG(K_3)\bigr)-2\gamma\mu\,.
\end{equation}

\medskip

{\it Third Step: Introducing and estimating $M$.}

\noindent Let us now define a real number $M$ such that
\begin{equation}\label{eq:mu}
\mu=\frac{(r-2)r}{(s+1)^r}\binom{s+1}{r}(1+\alpha)^{r-1}M\,.
\end{equation}

Since~\eqref{eq:lag-v} yields
\[
r\cdot\GG(K_r)=\sum_{i\in[n]}x_i\GG_i(K_{r-1})
=\sum_{i\in[n]}x_i\bigl(\lambda\GG_i(K_1)-\mu\bigr)=2\gamma\lambda-\mu\,,
\]
we have
\begin{equation}\label{eq:GKR}
\GG(K_r)=\frac 1{(s+1)^r}\binom{s+1}{r}(1+\alpha)^{r-1}\left[1-(r-1)\alpha-(r-2)(M-1)\right]
\end{equation}
owing to~\eqref{eq:lambda} and~\eqref{eq:mu}. 
In view of the presumed smallness of the left-hand side, this gives $M>1$. 
Eventually we shall prove $M\ls 1$ as well, thereby reaching a final contradiction. 
Before doing that, however, we need to provide a much weaker 
upper bound on $M$, so that the results from our fourth section become available. 
This is our next immediate task. To achieve it, we find it convenient to introduce 
the abbreviations
\[
A=\frac r{s(s+1)^{r-1}}\binom{s+1}{r}\,,
\]
\[
B=\frac{(r-2)r}{(s+1)^{r}}\binom{s+1}{r}\,,
\]
and
\[
C=\sqrt[r-2]{\frac{(2\gamma A)^{r-1}}{r\cdot\GG(K_{r})}}\,.
\]
Notice that the last of these stipulations is permissible, as Proposition~\ref{pr31} 
entails ${\GG(K_r)>}0$ in view of $\gamma>\tfrac{s-1}{2s}\gs\tfrac{r-2}{2(r-1)}$. 
Applying the inequality between the arithmetic and geometric mean of $r-1$ positive 
real numbers, one of which is equal to $r\cdot\GG(K_r)$ while each of the remaining 
$r-2$ numbers equals $C(1+\alpha)^{r-1}$, we get
\[
r\cdot\GG(K_r)+(r-2)C(1+\alpha)^{r-1}\gs 2(r-1)A\gamma(1+\alpha)^{r-2}
=2\gamma\lambda=r\cdot\GG(K_r)+\mu
\]
and hence $(r-2)C\gs BM$. Rising both sides to their $(r-2)^{\tn{nd}}$ powers we infer after some easy simplifications
\[
\left(\frac{2\gamma}s\right)^{r-1}\cdot\binom{s+1}{r}\gs (s+1)\GG(K_r)M^{r-2}\,.
\]
Using now Corollary~\ref{cor33}, we obtain
\[
\left(\frac{s-1}s\right)^{r-2}>\frac{s-r+1}{s-1}\cdot M^{r-2}\,.
\]
As this estimate coincides with~\eqref{eq:Msmall}, we now have the results from 
Section~\ref{sec:anna} at our disposal.

\medskip

{\it Fourth Step: Induction on $n$.}

\noindent Due to~\eqref{eq:lag-e} we have for each $i\in[n]$ the inequality
\[
(r-1)\GG_i(K_{r-1})=\sum_{j\in[n]-\{i\}}x_ja_{ij}\GG_{ij}(K_{r-2})\ls 
\sum_{j\in[n]-\{i\}}\lambda x_ja_{ij}=\lambda\GG_i(K_{1})\,.
\]
Hence~\eqref{eq:lag-v} yields
\[
(r-1)\left(\lambda\GG_i(K_{1})-\mu\right)\ls \lambda\GG_i(K_{1})\,,
\]
i.e.,
\[
\GG_i(K_{1})\ls\tfrac{(r-1)\mu}{(r-2)\lambda}=\tfrac s{s+1}(1+\alpha)M\,.
\]
Thus if we define the real number $\eta_i$ to obey 
$\GG_i(K_{1})=\tfrac s{s+1}(1+\alpha)\eta_i$, then $\eta_i\ls M$. 
Similarly but easier we have
\[
0\ls \GG_i(K_{r-1})= \lambda\GG_i(K_{1})-\mu\,,
\]
whence $\GG_i(K_{1})\gs\tfrac{\mu}{\lambda}$, so that altogether we get 
$\eta_i\in \left[\tfrac{r-2}{r-1}\cdot M, M\right]$.

The main objective of this step is to verify that for each $i\in[n]$ one has
\begin{align}\label{eq:window}
\lambda\GG_i(K_2)-&\GG_i(K_r)\ls \frac{(r-2)s(1+\alpha)^r}{(s-1)(s+1)^{r+1}}\binom{s+1}{r}\\
&\times\left(\tfrac 12(r-1)s\vartheta^2-(r-1)s\vartheta M+r(s-1)\eta_i M\right) \notag \,.
\end{align}

Plainly it suffices to show this for $i=n$ and for brevity we are henceforth going to 
write~$\eta$ instead of $\eta_n$. As $\GG_n(K_1)$ is positive, we may construct a 
weighted graph $\GG^{*}$ of order $n-1$ specified by the numbers 
$x_i^*=\tfrac{a_{in}x_i}{\GG_n(K_1)}$ for $i\in[n-1]$ and by the 
restriction of $a$ to~$[n-1]^{(2)}$. Our minimal choices of $r$ and $n$ entail 
$\GG^{*}(K_{r-1})\gs F_{r-1}(\delta)$ and $\GG^{*}(K_{r})\gs F_{r}(\delta)$, 
where $\delta=\GG^{*}(K_2)$. By our construction of $\GG^{*}$ and the case
$i=n$ of~\eqref{eq:lag-v}, we have
\[
\GG^{*}(K_{r-1})=
\frac{\GG_n(K_{r-1})}{\GG_n(K_1)^{r-1}}=\frac{\lambda\GG_n(K_1)-\mu}{\GG_n(K_1)^{r-1}}\,.
\]
Expressing the right-hand side in terms of $r$, $s$, $M$, and $\eta$, we are led 
to ${\GG^{*}(K_{r-1})=H(\eta)}$, where $H$ refers to the function defined just before 
Claim~\ref{Cl41}. Stipulating therefore ${\nu=F_{r-1}^{-1}(H(\eta))}$, as in the hypothesis 
of Claim~\ref{Cl43}, we have $\delta\ls\nu$. Now in view of
\[
\GG^{*}(K_{2})=\frac{\GG_n(K_{2})}{\GG_n(K_1)^{2}} 
\quad \tn{ and } \quad 
\GG^{*}(K_{r})=\frac{\GG_n(K_{r})}{\GG_n(K_1)^{r}}\,,
\]
we get
\[
\lambda\GG_n(K_2)-\GG_n(K_r)
=\frac{s(1+\alpha)^r\eta^2}{(s+1)^r}\times
\left\{(r-1)\binom{s}{r-1}\delta-s^{r-1}\eta^{r-2}\GG^*(K_r)\right\}\,.
\]
By our bound on $\GG^*(K_r)$, the difference in curly braces is at most $Q(\delta)$, 
where~$Q$ signifies the function introduced in Claim~\ref{Cl43},
and by that claim itself this is in turn at most $Q(\nu)$. Estimating now the 
product $\eta^2Q(\nu)$ by means of Claim~\ref{Cl45} we finish proving~\eqref{eq:window}.

\medskip

{\it Fifth Step: Concluding the argument.}

\noindent Notice that~\eqref{eq:deg-sum} and~\eqref{eq:gamma} yield
\[
\frac s{s+1}(1+\alpha)\sum_{i\in[n]}x_i\eta_i
=\sum_{i\in[n]}x_i\GG_{i}(K_1)=2\gamma=\frac s{s+1}(1-\alpha^2)\,,
\]
wherefore
\[
\sum_{i\in[n]}x_i\eta_i=1-\alpha\,.
\]
Thus multiplying~\eqref{eq:window} by $x_i$ and adding up the $n$ resulting inequalities we conclude
\begin{align*}
3\lambda\GG(K_3)&
-(r+1)\GG(K_{r+1})\ls\frac{(r-2)s(1+\alpha)^r}{(s-1)(s+1)^{r+1}}\binom{s+1}{r}\times \\
&\left(\tfrac 12(r-1)s\vartheta^2-(r-1)s\vartheta M+r(s-1)(1-\alpha) M\right)\,.
\end{align*}
Combining this with~\eqref{eq:main} and plugging in the 
formulae~\eqref{eq:gamma},~\eqref{eq:lambda},~\eqref{eq:mu}, and~\eqref{eq:GKR} 
expressing $\gamma$, $\lambda$, $\mu$, and $\GG(K_r)$ in terms of $r$, $s$, $\alpha$, 
and $M$ we get an estimate that on first sight looks rather lengthy. 
After massive cancelations, however, it just reads
\[
(1-\alpha)-2M\ls\frac{(1+\alpha)s^2}{s^2-1}(\vt^2-2M\vt)\,.
\]
Since $\vt\in\left[\frac{s-1}sM, M\right]$ holds by Claim~\ref{Cl42}, we have 
\[
\vt^2-2M\vt=(M-\vt)^2-M^2\ls -\tfrac{s^2-1}{s^2}\cdot M^2\,,
\]
whence
\[
(1-\alpha)-2M\ls-(1+\alpha)M^2\,,
\]
i.e.,
\[
(1-M)((1-\alpha)-(1+\alpha)M)\ls 0\,.
\]
But if $M$ really was greater than $1$, as suggested by our third step, then both factors 
of the left hand side had to be negative. This contradiction finally proves Theorem~\ref{Thm51}.
\end{proof} 

\bigskip

{\bf Acknowledgement.} I would like to thank Roman Glebov, Florian Pfen\-der, 
and Konrad Sperfeld for early discussions about Alexander Razborov's work on
on flag algebras. Further thanks go to L\'{a}szl\'{o}~Lov\'{a}sz and Mikl\'{o}s~Simonovits
for making reference~\cite{LovSim} electronically available, and to all those who have 
volunteered in verifying the proof presented in this paper independently shortly after 
I made it available to them: Konrad Sperfeld and the research group of Tibor Szab\'{o}.
I would like to thank the referees for their
efforts and constructive criticism.
Finally I am indebted to Mathias Schacht for his help with the revision and
to Vojt\v{e}ch R\"{o}dl who translated reference~\cite{Had-Nik} into English.

\begin{bibdiv}
\begin{biblist}

\bib{BolRaz}{article}{
   author={Bollob{\'a}s, B{\'e}la},
   title={Relations between sets of complete subgraphs},
   conference={
      title={Proceedings of the Fifth British Combinatorial Conference},
      address={Univ. Aberdeen, Aberdeen},
      date={1975},
   },
   book={
      publisher={Congressus Numerantium, No. XV, Utilitas Math., Winnipeg,
   Man.},
   },
   date={1976},
   pages={79--84},
   review={\MR{0396327}},
}

\bib{BolEx}{book}{
   author={Bollob{\'a}s, B{\'e}la},
   title={Extremal graph theory},
   series={London Mathematical Society Monographs},
   volume={11},
   publisher={Academic Press, Inc. [Harcourt Brace Jovanovich, Publishers],
   London-New York},
   date={1978},
   pages={xx+488},
   isbn={0-12-111750-2},
   review={\MR{506522}},
}

\bib{Csik}{article}{
   author={Csikv{\'a}ri, P{\'e}ter},
   title={Note on the smallest root of the independence polynomial},
   journal={Combin. Probab. Comput.},
   volume={22},
   date={2013},
   number={1},
   pages={1--8},
   issn={0963-5483},
   review={\MR{3002570}},
   doi={10.1017/S0963548312000302},
}

\bib{Fish}{article}{
   author={Fisher, David C.},
   title={Lower bounds on the number of triangles in a graph},
   journal={J. Graph Theory},
   volume={13},
   date={1989},
   number={4},
   pages={505--512},
   issn={0364-9024},
   review={\MR{1010583}},
   doi={10.1002/jgt.3190130411},
}

\bib{GolSan}{article}{
   author={Goldwurm, Massimiliano},
   author={Santini, Massimo},
   title={Clique polynomials have a unique root of smallest modulus},
   journal={Inform. Process. Lett.},
   volume={75},
   date={2000},
   number={3},
   pages={127--132},
   issn={0020-0190},
   review={\MR{1776664}},
   doi={10.1016/S0020-0190(00)00086-7},
}
	
\bib{Goodman}{article}{
   author={Goodman, A. W.},
   title={On sets of acquaintances and strangers at any party},
   journal={Amer. Math. Monthly},
   volume={66},
   date={1959},
   pages={778--783},
   issn={0002-9890},
   review={\MR{0107610}},
}

\bib{Had-Nik}{article}{
   author={Had{\v{z}}iivanov, Nikola{\u\i} G.},
   author={Nikiforov, Vladimir S.},
   title={The Nordhaus-Stewart-Moon-Moser inequality},
   language={Russian},
   journal={Serdica},
   volume={4},
   date={1978},
   number={4},
   pages={344--350},
   issn={0204-4110},
   review={\MR{543156}},
}

\bib{LovSim}{article}{
   author={Lov{\'a}sz, L.},
   author={Simonovits, M.},
   title={On the number of complete subgraphs of a graph. II},
   conference={
      title={Studies in pure mathematics},
   },
   book={
      publisher={Birkh\"auser, Basel},
   },
   date={1983},
   pages={459--495},
   review={\MR{820244}},
}

\bib{Momo}{article}{
   author={Moon, J. W.},
   author={Moser, L.},
   title={On a problem of Tur\'an},
   language={English, with Russian summary},
   journal={Magyar Tud. Akad. Mat. Kutat\'o Int. K\"ozl.},
   volume={7},
   date={1962},
   pages={283--286},
   review={\MR{0151955}},
}

\bib{Nik}{article}{
   author={Nikiforov, V.},
   title={The number of cliques in graphs of given order and size},
   journal={Trans. Amer. Math. Soc.},
   volume={363},
   date={2011},
   number={3},
   pages={1599--1618},
   issn={0002-9947},
   review={\MR{2737279}},
   doi={10.1090/S0002-9947-2010-05189-X},
}

\bib{Nost}{article}{
   author={Nordhaus, E. A.},
   author={Stewart, B. M.},
   title={Triangles in an ordinary graph},
   journal={Canad. J. Math.},
   volume={15},
   date={1963},
   pages={33--41},
   issn={0008-414X},
   review={\MR{0151957}},
}

\bib{MotzStr}{article}{
   author={Motzkin, T. S.},
   author={Straus, E. G.},
   title={Maxima for graphs and a new proof of a theorem of Tur\'an},
   journal={Canad. J. Math.},
   volume={17},
   date={1965},
   pages={533--540},
   issn={0008-414X},
   review={\MR{0175813}},
}

\bib{RazF}{article}{
   author={Razborov, Alexander A.},
   title={Flag algebras},
   journal={J. Symbolic Logic},
   volume={72},
   date={2007},
   number={4},
   pages={1239--1282},
   issn={0022-4812},
   review={\MR{2371204}},
   doi={10.2178/jsl/1203350785},
}

\bib{RazT}{article}{
   author={Razborov, Alexander A.},
   title={On the minimal density of triangles in graphs},
   journal={Combin. Probab. Comput.},
   volume={17},
   date={2008},
   number={4},
   pages={603--618},
   issn={0963-5483},
   review={\MR{2433944}},
   doi={10.1017/S0963548308009085},
}

\bib{SchTho}{article}{
   author={Schelp, Richard H.},
   author={Thomason, Andrew},
   title={A remark on the number of complete and empty subgraphs},
   journal={Combin. Probab. Comput.},
   volume={7},
   date={1998},
   number={2},
   pages={217--219},
   issn={0963-5483},
   review={\MR{1617934}},
   doi={10.1017/S0963548397003234},
}

\bib{Turan}{article}{
   author={Tur{\'a}n, Paul},
   title={Eine Extremalaufgabe aus der Graphentheorie},
   language={Hungarian, with German summary},
   journal={Mat. Fiz. Lapok},
   volume={48},
   date={1941},
   pages={436--452},
   review={\MR{0018405}},
}

\end{biblist}
\end{bibdiv}
\end{document}